\DeclareSymbolFont{AMSb}{U}{msb}{m}{n}
\newtheorem{cor}{Corollary}[section]
\newtheorem*{cor*}{Corollary}
\newtheorem{lem}[cor]{Lemma}
\newtheorem*{lem*}{Lemma}
\newtheorem{thm}[cor]{Theorem}
\newtheorem*{thm*}{Theorem}
\newtheorem{conj}[cor]{Conjecture}
\newtheorem*{conj*}{Conjecture}
\newtheorem{prop}[cor]{Proposition}
\newtheorem*{prop*}{Proposition}
\theoremstyle{definition}
\newtheorem{defn}[cor]{Definition}
\theoremstyle{remark}
\newtheorem{rmk}[cor]{Remark}
\newcommand{\bC}{\mathbb{C}}
\newcommand{\bG}{\mathbb{G}}
\newcommand{\bP}{\mathbb{P}}
\newcommand{\cC}{\mathcal{C}}
\newcommand{\cV}{\mathcal{V}}
\newcommand{\cW}{\mathcal{W}}
\newcommand{\cX}{\mathcal{X}}
\newcommand{\sC}{\mathscr{C}}
\newcommand{\sV}{\mathscr{V}}
\newcommand{\sX}{\mathscr{X}}
\renewcommand{\dim}{\operatorname{dim}}
\newcommand{\ord}{\operatorname{ord}}
\newcommand{\sm}{{\operatorname{sm}}}
\newcommand{\sing}{{\operatorname{sing}}}
\newcommand{\aut}{\operatorname{Aut}}
\renewcommand{\int}{{\operatorname{in}}}
\newcommand{\out}{{\operatorname{out}}}
\newcommand{\red}{{\operatorname{red}}}
\title{A note on non-uniform points for projections of hypersurfaces}
\author{Maria Gioia Cifani}
\address[M.G.C.]{Department of Mathematics and Physics, Roma 3 University, Largo San Murialdo 1, 00146 Roma, Italy}
\email{mariagioia.cifani@uniroma3.it}
\author{Riccardo Moschetti}
\address[R.M]{Department of Mathematics 'G. Peano', University of Turin, via Carlo Alberto 10, 10123 Torino, Italy}
\email{riccardo.moschetti@unito.it}
\begin{document}
\begin{abstract}
Let $X$ be an irreducible, reduced complex projective hypersurface of degree $d$. A \textit{uniform} point for $X$ is a point $P$ such that the projection of $X$ from $P$ has maximal monodromy.
We extend and improve some results concerning the finiteness of the locus of \textit{non-uniform} points for projections of hypersurfaces obtained by the authors and Cuzzucoli in \cite{ccm} only for $P$ not contained in $X$.
\end{abstract}
\maketitle

\section{Introduction}
Let $X$ be an irreducible, reduced complex projective hypersurface of dimension $n\geq 1$ and degree $d$. Fixing a point $P \in \bP^{n+1}$, we can study the linear projection map $\pi_P: \bP^{n+1} \smallsetminus \{P\} \to \bP^n$. If we restrict $\pi_P$ to $X$, we get a finite map of degree $d-1$ or $d$, depending on $P$ being a smooth point of $X$ or being outside $X$. 
It is well known that $M(\pi_P|_X)$ is a subgroup of the symmetric group $S_d$ if $P \notin X$ or $S_{d-1}$ if $P \in X^\sm$. A consequence of the classical uniform position principle is that, when $P$ is general in $\bP^{n+1}$, then $M(\pi_P|_X)$ coincides with $S_d$. 
This motivates the following definition.

\begin{defn} \label{defn: outer and inner uniform points}
A point $Q \in \bP^{n+1} \setminus X$ is called \emph{outer uniform point} if $M(\pi_Q|_X)$ is isomorphic to $S_d$. A point $Q \in X^\sm$ is called \emph{inner uniform point} if $M(\pi_Q|_X)$ is isomorphic to $S_{d-1}$.
We will denote by $\cW(X)^\int$ (resp. $\cW(X)^\out$) the locus of points $Q \in X^\sm$ (resp. $Q \in \bP^{n+1} \setminus X$) which are not--uniform. Furthermore, $\cW(X)$, also called the locus of non--uniform points will be the union of $\cW(X)^\int$ and $\cW(X)^\out$.
\end{defn}

The following conjecture extends \cite[Conjecture 1.7]{ccm} to inner non-uniform points:

\begin{conj} \label{conj:coni}
Let $X$ be a complex irreducible, reduced projective hypersurface of dimension $n\geq 1$. The locus $\cW(X)$ is finite unless $X$ is a cone.
\end{conj}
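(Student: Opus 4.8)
The plan is to convert uniformity into the two classical conditions on the monodromy and to control each of them geometrically; since $\cW(X)^\out$ is treated in \cite{ccm}, the new content is the inner locus $\cW(X)^\int\subseteq X^\sm$. Because $X$ is irreducible, the total space of the family of fibres of $\pi_P|_X$ is irreducible, so $M(\pi_P|_X)$ is transitive for every admissible $P$. By Jordan's theorem a transitive, primitive subgroup of $S_{d-1}$ that contains a transposition is all of $S_{d-1}$, hence for $P\in X^\sm$
$$P\in\cW(X)^\int \iff M(\pi_P|_X)\ \text{is imprimitive or contains no transposition.}$$
Accordingly I would split $\cW(X)^\int=\cT\cup\cB$, where $\cT$ is the locus whose monodromy has no transposition and $\cB$ the locus whose monodromy is imprimitive, and show each is finite unless $X$ is a cone; were $\cW(X)^\int$ infinite, one of $\cT,\cB$ would contain a curve $\Gamma$.

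A first reduction is to cut with a general plane. For a general $2$-plane $\Pi\ni P$ the projection $\pi_P$ sends $\Pi$ onto a line $L\subseteq\bP^n$, and the fibres of $\pi_P|_{X\cap\Pi}$ over $L$ coincide with those of $\pi_P|_X$; by a Lefschetz-type surjectivity of $\pi_1(L\smallsetminus\mathrm{branch})\to\pi_1(\bP^n\smallsetminus\mathrm{branch})$ the two monodromy groups agree, so $M(\pi_P|_X)=M(\pi_P|_{C})$ for the plane curve $C=X\cap\Pi$. This reduces the transposition question to $C$: the monodromy contains a transposition exactly when some line $\ell\ni P$ in $\Pi$ is simply tangent to $C$, i.e.\ meets $C$ at one smooth point $q\neq P$ with contact order two and transversally elsewhere, producing an ordinary branch point. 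Such lines fail to exist only on a proper closed subset, so $\cT$ is proper; the point is to show that if $\cT$ contains a curve $\Gamma$, the forced higher tangencies vary with $P\in\Gamma$ so as to sweep out lines contained in $X$, and that these lines share a common point, exhibiting $X$ as a cone.

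For $\cB$, imprimitivity of $\pi_P|_X$ is equivalent to a nontrivial factorization of the projection through an intermediate variety $\cZ$ with $1<\deg(\cZ\to\bP^n)<d-1$, i.e.\ to an invariant partition of the general fibre into blocks. Assuming $\cB$ contains a curve $\Gamma$, I would study the family of block decompositions indexed by $P\in\Gamma$ and argue that the blocks are cut out by a linear structure: the points of each block become collinear with $P$, algebraically in $P$, and the persistence of the decomposition along $\Gamma$ forces the corresponding lines onto $X$ through a fixed vertex, again a cone.

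I expect the imprimitive case to be the main obstacle. The transposition condition is first-order and essentially local, hence read off directly from tangency data, whereas imprimitivity is a global condition on the cover and a positive-dimensional family of invariant partitions need not obviously come from a linear projection. The crux is to rule out accidental factorizations and to integrate the infinitesimal variation of the blocks along $\Gamma$ into a genuine ruling of $X$ by lines through one point; the plane-section reduction and the outer results of \cite{ccm} should provide the one-dimensional input, after which the geometric challenge is to reassemble this data coherently as $P$ moves and to locate the vertex.
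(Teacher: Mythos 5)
You are attempting to prove Conjecture~\ref{conj:coni}, which the paper itself leaves open: there is no proof of this statement in the paper to compare against. What the paper establishes are partial results --- Theorem~\ref{thm:main} ($\cW(X)$ lies in a finite union of codimension-$2$ linear spaces), Theorem~\ref{thm:projectionsmooth} (finiteness for smooth hypersurfaces and general projections of smooth varieties), Theorem~\ref{thm:transpositions} (finiteness of Galois points, and of $\cW(X)^\out$ resp.\ $\cW(X)^\int$ when $d$ resp.\ $d-1$ is prime), and Corollary~\ref{cor:controesempi}, which isolates the only class of possible counterexamples. A complete blind proof would therefore be a new theorem going strictly beyond the paper, and your proposal is not that: it is a research plan whose two decisive steps are stated as goals rather than proved.

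Concretely, the gaps are these. For your locus $\cT$ (monodromy without transpositions), the claim that an infinite $\cT$ forces higher tangencies to ``sweep out lines contained in $X$'' sharing a common vertex is exactly what requires an argument, and you supply none; note that the paper does prove this half (Proposition~\ref{prop:trasposizioni}), but by a quite different mechanism --- focal loci of the family of tangent lines through points of a curve in $\cW(X)$, combined with biduality ($X$ is not a cone if and only if $X^*$ is nondegenerate) --- not by plane sections and a deformation of tangency data. For your locus $\cB$ (imprimitive monodromy), the assertion that a curve of imprimitive points forces a ruling of $X$ by lines through a fixed vertex has no supporting mechanism at all; this is precisely the part of the conjecture that remains open. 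Indeed, combining Proposition~\ref{prop:trasposizioni} with Proposition~\ref{prop:generatedtranspositions} and Remark~\ref{remark:decomposable}, an infinite $\cW(X)$ on a non-cone consists, up to finitely many points, of points whose projection is decomposable, and this decomposability manifests as reducibility of the linear sections $X\cap\bP^{k+1}$ through a fixed $\bP^k$ --- which is exactly the unresolved situation of Corollary~\ref{cor:controesempi}. Your outline acknowledges this case as ``the main obstacle'' but offers no idea for excluding it, so the proposal cannot be completed as written. (A smaller point: your claimed equality $M(\pi_P|_X)=M(\pi_P|_C)$ for a general plane section is plausible for a fixed $P$ by a Zariski-type theorem, but the paper only uses, and only needs, the inclusion of Proposition~\ref{prop:monodromygeneralsection}; if you run the section argument along a curve $\Gamma$ of non-uniform points, the plane must be chosen generically for each $P$, and this uniformity issue would need care.)
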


The case of $n=1$ has been addressed in Proposition 3.4 and Theorem 3.5 of \cite{ps}. Notice that if $X$ is an irreducible curve, $X$ is not a cone and so the conjecture holds. 
Other evidence for the conjecture can be found in the context of Galois points. We say $Q \in \bP^{n+1}$ is a \emph{Galois} point if the field extension associated with the projection $\pi_Q|_X$ is a Galois extension. Galois points are a particular case of non-uniform points, and have been extensively studied in \cite{Yoshihara, FukCom, FukAut}. In particular, Theorem 1.1 and Theorem 1.2 of \cite{FT} show that Conjecture \ref{conj:coni} holds for Galois points when $X$ is a normal hypersurface. 
In a slightly different direction, it has been proven in \cite{cuk} and \cite{cifani2020monodromy} that the conjecture holds true for $X$ general, with $\cW(X)$ empty. The case of $X$ being a smooth hypersurfaces has been proved in \cite[Theorem 1.1]{cms} for $n=2$ and in \cite[Theorem 1.3]{ccm} in higher dimension, both for outer non-uniform points.

The aim of this paper is twofold: we first extend the results of \cite{ccm} to inner non--uniform points, and then we study the possible counterexamples to the conjecture, showing that the only possible class of hypersurface which could provide a counterexample to the conjecture is given by hypersurfaces $X$ in $\bP^{n+1}$, which are not cones, for which there is a $\bP^k$ ($0<k<n$), such that $X\cap \bP^{k+1}$ is reducible for every $\bP^{k+1}$ which contains $\bP^k$. This is done in Corollary \ref{cor:controesempi} and improves \cite[Remark 4.11]{ccm}.

The key result is a generalisation of \cite[Theorem 1.2]{ccm}:
\begin{thm}\label{thm:main}
Let $X$ be a complex irreducible, reduced hypersurface of $\ \bP^{n+1}$, $n \geq 2$. Then $\cW(X)$ is contained in a finite union of linear spaces of codimension $2$ in $\bP^{n+1}$.
\end{thm}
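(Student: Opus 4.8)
The plan is to split $\cW(X)=\cW(X)^\out\cup\cW(X)^\int$, dispose of the outer locus by \cite[Theorem 1.2]{ccm}, and reduce the inner locus $\cW(X)^\int\subseteq X^\sm$ to that outer statement by a specialisation argument along lines through the centre. The group-theoretic hinge is that the stabiliser of a point in $S_d$ is isomorphic to $S_{d-1}$: if the degree-$d$ outer monodromy at a centre $P'$ close to $P$ is the full $S_d$, and the inner monodromy at $P$ can be realised inside the stabiliser in $M(\pi_{P'}|_X)$ of a suitable sheet, then $M(\pi_P|_X)$ is forced to be $S_{d-1}$, so $P$ is inner uniform. Converting this into a containment for $\cW(X)^\int$ is the whole task.

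To set it up, fix $P\in X^\sm$, a general line $\ell$ through $P$ with $\ell\not\subseteq T_PX$, and a general point $P'\in\ell\smallsetminus X$. The projection from the line $\ell=\overline{PP'}$ factors through both $\pi_P|_X$ and $\pi_{P'}|_X$, so the two coverings are governed by the same family of $2$-planes containing $\ell$. The line $\ell$ meets $X$ at $P$ and at $d-1$ residual points; as $P'$ specialises to $P$, the sheet of the degree-$d$ cover $\pi_{P'}|_X$ carried by the intersection point nearest $P$ degenerates onto $P$, while the remaining $d-1$ sheets assemble into the inner cover $\pi_P|_X$. I would make this precise to identify $M(\pi_P|_X)$ with a subquotient of the stabiliser in $M(\pi_{P'}|_X)$ of the vanishing sheet. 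Combined with the hinge above, this yields the implication: if $P'$ is outer uniform then $P$ is inner uniform.

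Taking the contrapositive transfers the inner problem to the outer one. If $P\in\cW(X)^\int$, then for a general line $\ell$ through $P$ a general point $P'\in\ell$ is outer non-uniform, hence by \cite[Theorem 1.2]{ccm} it lies in the finite union $B$ of codimension-two linear spaces that contains $\cW(X)^\out$. Since a dense open subset of $\ell$ lies in the closed set $B$, the whole line $\ell\subseteq B$; in particular $P\in\ell\subseteq B$. Thus $\cW(X)^\int\subseteq B$, and therefore $\cW(X)$ is contained in a finite union of linear spaces of codimension two, as claimed.

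The step I expect to be the main obstacle is the sheet-degeneration comparison between the degree-$d$ and the degree-$(d-1)$ coverings. The inner monodromy is not literally a subgroup of the outer one: the base spaces and branch data differ, and one sheet is lost in the limit $P'\to P$, so semicontinuity of monodromy must be applied in the universal family of projections rather than naively. I would check there that the generic inner fibre is exactly the residual of a generic outer fibre, that no spurious monodromy is created along the special lines $\ell\subseteq T_PX$ or at the ramification contributed by the vanishing sheet, and that the identification with the point-stabiliser is compatible with specialisation. Making this comparison rigorous is precisely what upgrades \cite[Theorem 1.2]{ccm} from outer to inner non-uniform points.
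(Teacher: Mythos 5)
Your reduction of the inner locus to the outer one rests on the implication: \emph{if a general point $P'$ on a general line through $P$ is outer uniform, then $P$ is inner uniform}. This implication is false, and one can see it is false without any computation: the outer uniform locus is always a dense open subset of $\bP^{n+1}$ (this is the uniform position principle recalled in the introduction), so for \emph{every} $P\in X^\sm$ the hypothesis of your implication holds; your statement would therefore prove $\cW(X)^\int=\emptyset$ for every irreducible reduced hypersurface. But inner non--uniform points do exist --- for instance inner Galois points on suitable smooth hypersurfaces \cite{Yoshihara}, which have cyclic monodromy of order $d-1$ and are therefore inner non--uniform as soon as $d-1\geq 3$. (Concretely: a smooth plane quartic with an inner Galois point has finite outer non--uniform locus, yet the Galois point is inner non--uniform, contradicting your implication already for $n=1$; the same phenomenon persists for $n\geq 2$.) The underlying reason the mechanism cannot work is the direction of semicontinuity of monodromy: as $P'\to P$ along $\ell$, the degree-$d$ covers $\pi_{P'}|_X$ degenerate to the reducible cover $Bl_PX\sqcup\bP^n\to\bP^n$, and specialisation gives an inclusion $M(\pi_P|_X)\subseteq \stab(M(\pi_{P'}|_X))$ of the \emph{special} monodromy into the stabiliser of the vanishing sheet inside the \emph{general} one. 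When $M(\pi_{P'}|_X)=S_d$ this only says $M(\pi_P|_X)\leq S_{d-1}$, which is vacuous: you need the reverse inclusion (fullness of the inner monodromy), and no specialisation argument can supply a lower bound of that kind. This is exactly the point where ``identifying $M(\pi_P|_X)$ with the point-stabiliser'' silently replaces a subgroup by the whole group.

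There is also a secondary problem with the outer half: \cite[Theorem 1.2]{ccm} cannot be quoted as a black box giving that $\cW(X)^\out$ lies in a finite union of codimension-$2$ linear spaces, because that theorem is strictly weaker than the statement here --- its conclusion carries an extra technical case (corresponding to lines lying in tangent cones at singular points, case (C4) of Remark \ref{rem:cases}), which the present paper removes via the new local tangent-cone computation of Lemma \ref{lem:localcomputationtangentcone}. For contrast, the paper does not reduce inner to outer at all: it treats both on the same footing by defining, for $P\in\cW(X)$, the family $\cV(X)_P$ of lines that remain multi--tangent to $X$ \emph{after discarding any tangency contribution at $P$ itself} (this discarding, justified by Lemma \ref{prop:semplice 2 multitangent} and Remark \ref{rmk:tangentisemplici}, is the device that makes the inner case work), proves this family is filling whenever a curve of $\cW(X)$ is not contained in a codimension-$2$ linear space (Proposition \ref{prop:filling}), and then derives a contradiction by counting focal multiplicities on a general line of the family (Lemmas \ref{lem: degree focal scheme}, \ref{lem:fundpoint} and \ref{lem:case123}), with the residual tangent-cone case excluded by Lemma \ref{lem:localcomputationtangentcone} and Lemma \ref{lemma:depoi}. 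If you want to salvage your approach, you would need a genuinely new argument producing transpositions or fullness of the inner monodromy from outer data, which is essentially what Proposition \ref{prop:trasposizioni} does by other (focal) means.
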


The proof mainly relies on the study of the focal locus of certain families of lines. While the core results were already developed in \cite{CF} and \cite{ccm} the difficulty in generalizing the setting to inner non--uniform point relies upon carefully choosing the family. Moreover, we were able to improve \cite[Theorem 1.2]{ccm} thanks to a local study of the tangent cone to singular points of the hypersurface (Lemma \ref{lem:localcomputationtangentcone}). 

As a consequence of the main theorem, we can show the finiteness of the locus of non--uniform points for smooth hypersurfaces:
\begin{thm} \label{thm:projectionsmooth}
Assume $X$ is a complex smooth projective hypersurface or a general projection of a smooth variety to a hypersurface. Then, the locus $\cW(X)$ is finite.
\end{thm}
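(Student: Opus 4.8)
The plan is to deduce Theorem \ref{thm:projectionsmooth} from Theorem \ref{thm:main} by showing that the finite union of codimension-$2$ linear spaces produced there must actually be a \emph{finite set of points} in the smooth case. The starting point is that, by Theorem \ref{thm:main}, the non-uniform locus $\cW(X)$ sits inside a finite collection of linear subspaces $\Lambda_i \subset \bP^{n+1}$, each of codimension $2$. If $X$ is smooth and $n\geq 2$, I would argue that such a linear space $\Lambda$ cannot meet $\cW(X)$ in anything of positive dimension, because a positive-dimensional family of non-uniform points forces geometric degeneracy that a smooth hypersurface cannot exhibit. Concretely, I would separate the analysis into the outer part $\cW(X)^{\out}$ and the inner part $\cW(X)^{\int}$, handle each inside a fixed $\Lambda$, and conclude finiteness of their union.

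First I would recall that the outer case is essentially already settled: by \cite[Theorem 1.3]{ccm} the locus $\cW(X)^{\out}$ is finite for smooth $X$ with $n\geq 2$, so the genuinely new content is controlling the inner locus $\cW(X)^{\int}\subset X^{\sm}=X$. For the inner locus I would suppose, toward a contradiction, that $\cW(X)^{\int}$ is infinite; since it lies in finitely many codimension-$2$ linear spaces $\Lambda$, some component $Z=\overline{\cW(X)^{\int}\cap\Lambda}$ would be a positive-dimensional subvariety of $X\cap\Lambda$. The aim is to show that a positive-dimensional family of inner non-uniform points on a smooth $X$ forces $X$ to be a cone (contradicting smoothness for $n\geq 2$) or otherwise contradicts smoothness directly.

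The key geometric step is to interpret membership in $\cW(X)^{\int}$ in terms of the projection from $P\in X$: the monodromy of $\pi_P|_X$ fails to be full $S_{d-1}$, and following the focal-locus analysis underlying Theorem \ref{thm:main}, this means that for each such $P$ the relevant family of lines through $P$ has a degenerate focal behaviour. I would leverage Lemma \ref{lem:localcomputationtangentcone} on the tangent cone at singular points together with the focal-locus machinery of \cite{CF} to show that, along a positive-dimensional $Z\subset X$, the tangent spaces $T_PX$ or the lines joining points of $Z$ organize into a special linear configuration. The expected conclusion is that $X$ would have to contain a linear space through which all projections degenerate, i.e. $X$ is a cone, which is impossible for a smooth hypersurface of dimension $n\geq 2$ (a smooth hypersurface is never a cone). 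This yields that $Z$ is $0$-dimensional, hence $\cW(X)^{\int}$ is finite, and combined with the outer case gives finiteness of all of $\cW(X)$.

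The main obstacle I anticipate is precisely this last implication: upgrading ``positive-dimensional family of non-uniform points inside a codimension-$2$ linear space'' to ``$X$ is a cone'' in the \emph{inner} setting, where the base point $P$ varies on $X$ itself and the projection has degree $d-1$ rather than $d$. The focal-locus estimates in \cite{CF} and \cite{ccm} were designed for outer points, and the careful choice of the line family for inner points (alluded to in the discussion preceding Theorem \ref{thm:main}) is exactly what must be re-examined to make the degeneration argument go through. Handling the interaction of $Z$ with the singularities permitted by the general statement, and ensuring the smoothness hypothesis genuinely rules out the degenerate configuration rather than merely the trivial cone, is where the real work lies. For the ``general projection of a smooth variety'' variant, I would additionally note that such an $X$ is normal with mild (ordinary double-point type) singularities, so the same focal-locus degeneration argument applies verbatim once the tangent-cone computation of Lemma \ref{lem:localcomputationtangentcone} is invoked at those singular points.
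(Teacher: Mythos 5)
There is a genuine gap, and it sits exactly where you placed your ``main obstacle'': the implication ``a positive-dimensional component $Z\subseteq\cW(X)^\int\cap\Lambda$ forces $X$ to be a cone'' is never proved in your proposal --- it is announced as the ``expected conclusion.'' But that implication \emph{is} the theorem (indeed, it is the smooth case of Conjecture \ref{conj:coni}): the focal-locus machinery of \cite{CF} and \cite{ccm}, together with Lemma \ref{lem:localcomputationtangentcone}, is precisely what yields Theorem \ref{thm:main}, i.e.\ codimension $2$, and nothing in your sketch explains how smoothness upgrades codimension $2$ to finiteness. So after the (legitimate) citation of \cite[Theorem 1.3]{ccm} for $\cW(X)^\out$, what remains of your argument for the inner locus is a restatement of the problem, not a proof. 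Your closing claim for the general-projection case is also unjustified: a general projection of a smooth $n$-fold to a hypersurface does not have only ``ordinary double-point type'' singularities (there are pinch points, multiple points, and deeper strata as $n$ grows), so nothing applies ``verbatim'' there.

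For comparison, the paper's proof never shows that $X$ is a cone; it derives a contradiction by a section argument combined with the structure theory of generic projections. One lets $K\cong\bP^k$ be the minimal linear span of an infinite component of $\cW(X)$ (so $1\leq k\leq n-1$ by Theorem \ref{thm:main}) and takes a general $H\cong\bP^{k+1}\supset K$. If $X\cap H$ is irreducible, then it is also reduced, and Propositions \ref{prop:monodromygeneralsection} and \ref{prop:filling} allow one to apply Theorem \ref{thm:main} to the hypersurface $X\cap H\subset H$: a curve $\sC\subset\cW(X)$ spanning $K$ would lie in $\cW(X\cap H)$ while spanning a hyperplane of $H$, a contradiction. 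If $X\cap H$ is reducible --- the only case where smoothness or the general-projection hypothesis is used --- one writes $X$ as the image of $\pi_L\colon\tilde X\subset\bP^{2n+1}\to\bP^{n+1}$ and sets $\tilde K=\langle L,K\rangle$; Bertini (Theorem \ref{thm:bertinilaz}) forces $\tilde X\cap\tilde K$ to be non-reduced, hence $T_x\tilde X\subset\tilde K$ at its points, which places those points in the Mather stratum $\tilde X_{n-k}$ of Theorem \ref{thm:Mather}. The dimension count $k-1\leq n-(n-k)(n-k+1)$ forces $k=n-1$, and then smoothness of $\tilde X_1$ gives the contradiction. This use of Mather's theorem on generic projections is the missing mechanism that your proposal would need to supply; without it (or a genuine substitute), the proposal does not constitute a proof.
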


We also get a generalisation of the result in \cite{FT}, which also proves the conjecture when $X$ has prime degree.

\begin{thm} \label{thm:transpositions}
Assume $X$ is an irreducible and reduced hypersurface in $\bP^{n+1}$ which is not a cone. 
Then, the number of Galois points is finite. If $X$ has also prime degree (resp. $\deg(X)-1$ is prime), then $\cW(X)^\out$ (resp. $\cW(X)^\int$) is finite. 
\end{thm}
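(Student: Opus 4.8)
The plan is to combine an elementary group-theoretic observation with a geometric criterion producing transpositions in the monodromy, and to feed both into the confinement supplied by Theorem~\ref{thm:main}. I may assume $n\ge 2$ and $d=\deg X\ge 3$: for $n=1$ an irreducible plane curve is never a cone and $\cW(X)$ is finite by Proposition~3.4 and Theorem~3.5 of \cite{ps}, so all three assertions are immediate, while covers of degree at most $2$ are Galois but uniform and hence contribute no non-uniform (in particular, no relevant Galois) points. Now let $Q$ be one of the points to be controlled and write $m$ for the degree of $\pi_Q|_X$, so $m=d$ when $Q\notin X$ and $m=d-1$ when $Q\in X^\sm$. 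If $Q$ is a Galois point then $M(\pi_Q|_X)$ is a regular permutation group of order $m$; being non-uniform forces $m\ge 3$, and a regular group of degree at least $3$ contains no transposition, since its non-identity elements act without fixed points whereas a transposition fixes $m-2\ge 1$ letters. If instead $m=d=p$ (resp.\ $m=d-1=p$) is prime and $Q$ is non-uniform, then $M(\pi_Q|_X)$ is a transitive subgroup of $S_p$; transitivity of prime degree forces primitivity, and by Jordan's theorem a primitive group containing a transposition is all of $S_p$, contradicting non-uniformity. In every case the monodromy at $Q$ contains \emph{no} transposition.

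The geometric input is a converse: such points must avoid simple tangent lines. Call a line $\ell\ni Q$ a \emph{simple tangent line} to $X$ if it is tangent to $X$ at a single point $p\in X^\sm$ with contact of order exactly two and meets $X$ transversally at its remaining intersection points (and, in the inner case, transversally at $Q$ as well). I would prove the standard local fact that a simple tangent line forces a transposition into $M(\pi_Q|_X)$: its image is a smooth point of the branch locus in $\bP^n$ over which exactly two sheets of the cover come together in the normal form $y^2=t$ while the others stay unramified, so the local monodromy around that branch point is a transposition, which lies in the global monodromy group. Combining this with the first paragraph, each point to be controlled lies in the closed locus $B\subseteq\bP^{n+1}$ consisting of those $Q$ through which $X$ possesses no simple tangent line.

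It remains to bound $B$ on the spaces produced by Theorem~\ref{thm:main}. By that theorem $\cW(X)^\out$, $\cW(X)^\int$ and the locus of (non-uniform) Galois points are all contained in a finite union of codimension-$2$ linear subspaces $\Lambda_1,\dots,\Lambda_r$, so it suffices to show each $B\cap\Lambda_i$ is finite. Suppose not; since $B$ is closed, the Zariski closure of an infinite $B\cap\Lambda_i$ is a positive-dimensional subvariety $Z\subseteq\Lambda_i$ every point of which carries no simple tangent line to $X$. The decisive claim is then: \emph{if $X$ admits no simple tangent line through any point of a positive-dimensional family $Z$, then $X$ is a cone.} Granting it, the standing hypothesis that $X$ is not a cone yields a contradiction, so each $B\cap\Lambda_i$ is finite, and, there being finitely many $\Lambda_i$, all three loci are finite.

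The heart of the matter, and the main obstacle, is this last claim. I would establish it with the focal-locus techniques for families of lines developed in \cite{CF} and \cite{ccm}, applied to the family of lines joining the points of $Z$ to $X$: the systematic failure of simple tangency means the contact of these lines with $X$ is everywhere degenerate, which confines the focal scheme to a fixed linear subspace and, through the local study of the tangent cone at the singular points of $X$ (Lemma~\ref{lem:localcomputationtangentcone}), produces a common vertex for $X$. The subtlety is that simple tangency can be lost in genuinely different ways --- contact of order $\ge 3$, tangency at several points of a line at once, or tangency occurring along $X^\sing$ --- and one must rule out each possibility persisting over a positive-dimensional base without already forcing a vertex; this stratified case analysis, rather than the group theory or the transposition criterion, is where the real work lies.
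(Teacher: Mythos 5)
Your group-theoretic reductions are correct and essentially match the paper's: a non-uniform Galois point of degree $\ge 3$ has regular, hence transposition-free, monodromy; and in prime degree, transitivity forces primitivity, so a transposition would force the full symmetric group (the paper reaches primitivity via indecomposability of a prime-degree map, Remark \ref{remark:decomposable}, rather than via the prime-degree-transitive fact, but both are sound). Your reduction of the theorem to the statement that the points in question admit no simple tangent line is also correct, and agrees with the paper's use of Proposition \ref{prop:cyclestructure} and Remark \ref{rmk:tangentisemplici}. The detour through the codimension-$2$ subspaces of Theorem \ref{thm:main} is unnecessary (an infinite algebraic locus already contains a positive-dimensional subvariety), but it is harmless.

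The genuine gap is your ``decisive claim'': \emph{if no simple tangent line to $X$ passes through any point of a positive-dimensional family $Z$, then $X$ is a cone.} This is precisely the content of the paper's Proposition \ref{prop:trasposizioni}, i.e.\ it is the heart of the theorem, and you do not prove it --- you acknowledge that ``this is where the real work lies'' and offer only a sketch. Moreover, the sketch points in the wrong direction: you propose to confine the focal scheme to a linear subspace and extract a vertex via the tangent-cone Lemma \ref{lem:localcomputationtangentcone}, but that lemma is used in the paper only to dispose of case (C4) in the proof of Theorem \ref{thm:main}, not here. The actual mechanism is different and uses projective duality: since $X$ is not a cone, its dual variety $X^*$ is nondegenerate (\cite[Theorem 1.25]{ProjDual}), so the hyperplanes dual to points of a curve $\sC$ cut $X^*$ properly; dualizing back, the general point $Q\in\sC$ lies on an $(r-1)$-dimensional family of tangent hyperplanes ($r=\dim X^*$), each tangent along an $(n-r)$-dimensional subvariety, so the lines through $Q$ tangent to $X^\sm$ form a family of dimension $n-1$ and the total family $\cX$ over $\sC$ is filling (dominance again uses that $X$ is not a cone). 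Then the focal count on a general line $\ell\in\cX_Q$ closes the argument: $Q$ is a focus of multiplicity $n-1$ by Lemma \ref{lem:fundpoint}, and any failure of simple tangency (two tangency points, or contact order $\ge 2$) contributes at least $2$ more to the focal degree by Lemma \ref{lem:focalnewtuttoinsieme}, exceeding the bound $n$ of Lemma \ref{lem: degree focal scheme}. Hence the general such line is simply tangent, producing the transposition. Without this duality input your claim remains unestablished, so the proposal as written does not prove the theorem.
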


In the following, we will always work on the field $\bC$ of complex numbers. For a variety $X$ we will denote by $X^\sing$ its singular locus, and by $X^\sm$ the locus $X \setminus X^\sing$.

\section{Preliminaries}
\subsection{Families of lines and focal loci}\label{preliminaries}
A family $\cX$ of lines in $\bP^{n+1}$ can be parametrized by an integral subscheme $S$ of the Grassmannian $\bG(1,\bP^{n+1})$. We can describe it by the following diagram

\begin{equation*} 
\xymatrix{
\cX \ar[d]_{p|_{\cX}} \ar[dr]_{f} \ar@{^{(}->}[r]^-i & S \times \bP^{n+1} \ar[d]^{q} & \\
S &  \bP^{n+1},
}
\end{equation*}
where the map $i$ is the inclusion and $p, q$ are the projections on the first and second factors, respectively.

\begin{defn}
The \emph{focal sheaf} is the sheaf over $\cX$ associated with the kernel of the map differential $\mathrm{d}f$. The support of this sheaf is called the \emph{focal locus} of the family $\cX$.
\end{defn}

\begin{defn}
A family $\cX$ of lines in $\bP^{n+1}$ is called \emph{filling family} if the dimension of the base $S$ is $n$ and the map $f=i \circ q$ is dominant. 
\end{defn}

We need some results concerning relationships between the focal locus and the property of being a filling family.

\begin{lem}[\cite{CF}, Proposition 4.1] \label{lem: degree focal scheme}  
Let $\cX$ be a filling family of lines in $\bP^{n+1}$ and let $s$ be a general point of $S$. Then the focal locus in the fibre $\ell_s$ consists of $n$ points counted with the right multiplicity.
\end{lem}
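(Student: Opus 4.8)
The plan is to translate the condition of being a focal point into the vanishing of a determinant whose degree in the fibre coordinate is exactly $n$. First I would record the dimension count: since $S$ is $n$-dimensional and each fibre $\ell_s$ of $p|_{\cX}$ is a line, the total space $\cX$ has dimension $n+1$, the same as the target $\bP^{n+1}$. Because the family is filling, $f$ is dominant and therefore generically finite, so $\mathrm{d}f_x$ is an isomorphism at a general point $x \in \cX$; in particular the focal locus, where $\ker \mathrm{d}f_x \neq 0$, is a proper closed subset and is equivalently the locus where $\mathrm{d}f_x$ drops rank.

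Next I would separate the vertical and horizontal directions. From the fibration $p|_{\cX}\colon \cX \to S$ one has the exact sequence $0 \to T_x\ell_s \to T_x\cX \to T_sS \to 0$. Restricted to a fibre, $f$ is the inclusion of the line $\ell_s$, so $\mathrm{d}f$ sends $T_x\ell_s$ isomorphically onto the tangent direction of $\ell_s$ inside $T_{f(x)}\bP^{n+1}$. Hence $\ker\mathrm{d}f_x$ is governed by the induced characteristic map
\[
\chi_x \colon T_sS \longrightarrow T_{f(x)}\bP^{n+1}/T_x\ell_s,
\]
a homomorphism between two $n$-dimensional vector spaces, the target being the normal space to the line, naturally identified with the Grassmann quotient $\cQ_s$. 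A point $x \in \ell_s$ is focal precisely when $\det\chi_x = 0$.

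The core computation is to exhibit the dependence of $\chi_x$ on $x$. Writing the line as $\ell_s = \{[\lambda_0 v_0 + \lambda_1 v_1]\}$ for a basis $v_0, v_1$ of the tautological subspace $\cS_s$, and letting $\kappa\colon S \to \bG(1,\bP^{n+1})$ denote the modular map with differential $\mathrm{d}\kappa_s(\xi) \in \operatorname{Hom}(\cS_s, \cQ_s)$, one has $\chi_x(\xi) = \mathrm{d}\kappa_s(\xi)(\lambda_0 v_0 + \lambda_1 v_1) = \lambda_0 A(\xi) + \lambda_1 B(\xi)$, where $A(\xi) = \mathrm{d}\kappa_s(\xi)(v_0)$ and $B(\xi) = \mathrm{d}\kappa_s(\xi)(v_1)$ are independent of $x$. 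Thus, after choosing bases, $\chi_x$ is represented by the $n \times n$ matrix $\lambda_0 A + \lambda_1 B$ whose entries are linear homogeneous forms in $[\lambda_0:\lambda_1]$.

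Finally I would conclude. The determinant $\det(\lambda_0 A + \lambda_1 B)$ is a homogeneous polynomial of degree $n$ in $[\lambda_0:\lambda_1]$ parametrising $\ell_s \cong \bP^1$. By the generic finiteness established above, for a general $s \in S$ this polynomial does not vanish identically on the fibre, since otherwise $\mathrm{d}f$ would be non-injective at every point of $\ell_s$, contradicting dominance; hence it has exactly $n$ roots counted with multiplicity, and this multiplicity is precisely the scheme-theoretic length of the focal locus along $\ell_s$. The main obstacle is the bookkeeping in the second and third steps: one must check that the identification $\ker\mathrm{d}f_x \cong \ker\chi_x$ loses no vertical contribution, and that passing to the quotient $\cQ_s$ genuinely linearises the focal condition in $(\lambda_0,\lambda_1)$, so that the degree of the focal polynomial is pinned down to be exactly $n$ rather than something smaller.
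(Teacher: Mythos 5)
Your argument is correct, and it is essentially the proof behind the cited statement: the paper itself gives no proof of this lemma (it is quoted from \cite{CF}, Proposition 4.1), and the standard argument there is exactly your reduction of the focal condition on a general fibre to the vanishing of the determinant of the characteristic map $T_sS \otimes \cO_{\ell_s} \to N_{\ell_s/\bP^{n+1}} \cong \cO_{\ell_s}(1)^{\oplus n}$, i.e.\ of the binary form $\det(\lambda_0 A + \lambda_1 B)$ of degree $n$. Your bookkeeping also checks out: the projection $T_x\cX \to T_sS$ carries $\ker \mathrm{d}f_x$ isomorphically onto $\ker \chi_x$ (injectivity since $\mathrm{d}f$ is injective on $T_x\ell_s$, surjectivity by correcting a lift with a vertical vector), and generic finiteness of $f$, which follows from the filling hypothesis, rules out identical vanishing of the determinant on the general fibre.
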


The following result dates back to Segre, in \cite{Segre}:
 
\begin{lem}\label{lem:fundpoint}
Let $\sX$ be a filling family of lines in $\bP^{n+1}$ and assume that $\sX'$ is a subfamily consisting of lines all passing through a point $P$. If the dimension of the base of $\sX'$ is $k$, then $P$ has multiplicity $k$ as a focal point of $\sX$.
\end{lem}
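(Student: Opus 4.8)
The plan is to reduce the statement to a local computation of the Jacobian determinant whose vanishing cuts out the focal locus on a general line of the family, and then to exhibit a factor of $t^{k}$ in that determinant along the subfamily $\sX'$.

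First I would fix a general line $\ell_{s_0}$ of $\sX'$ and choose a local holomorphic parametrization near $s_0$. Writing homogeneous coordinates on $\bP^{n+1}$ as vectors in $\bC^{n+2}$, I take local coordinates $s=(s_1,\dots,s_n)$ on $S$ and two maps $A,B\colon S\to\bC^{n+2}$ so that $\ell_s$ is the projectivization of $\langle A(s),B(s)\rangle$; a point of $\ell_s$ is then $[A(s)+tB(s)]$, and locally $\cX=\{(s,t)\}$ with $f(s,t)=[A(s)+tB(s)]$. The tangent space to $\cX$ is spanned by $\partial_t f\sim B$ and $\partial_{s_i}f\sim \partial_{s_i}A+t\,\partial_{s_i}B$, and since $T_{[x]}\bP^{n+1}=\bC^{n+2}/\langle x\rangle$, the map $\mathrm{d}f$ fails to be injective at $(s,t)$ precisely when these $n+1$ vectors, together with $x=A+tB$, are linearly dependent in $\bC^{n+2}$.

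Next I would record this condition as the vanishing of the $(n+2)\times(n+2)$ determinant with columns $x$, $B$, and $\partial_{s_i}A+t\,\partial_{s_i}B$. Subtracting $t$ times the column $B$ from the column $x$ replaces $x$ by $A$, so the focal condition on $\ell_s$ becomes $\Phi_s(t)=0$, where
\[
\Phi_s(t)=\det\bigl[\,A \mid B \mid \partial_{s_1}A+t\,\partial_{s_1}B \mid \cdots \mid \partial_{s_n}A+t\,\partial_{s_n}B\,\bigr].
\]
The first two columns are independent of $t$ and each of the remaining $n$ columns is affine in $t$, so $\Phi_s$ has degree at most $n$ in $t$; this is consistent with Lemma \ref{lem: degree focal scheme}, and the multiplicity of a focus is the order of vanishing of $\Phi_s$ at the corresponding value of $t$.

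The key step is to specialize to the subfamily. Since the $k$-dimensional base $S'$ of $\sX'$ consists of lines through $P$, I would choose the coordinates so that $S'=\{s_{k+1}=\dots=s_n=0\}$ and so that the representative $A$ satisfies $A(s)\equiv P$ on $S'$, with $P$ corresponding to $t=0$. Then at $s_0\in S'$ one has $\partial_{s_i}A=0$ for $i=1,\dots,k$, so the columns $\partial_{s_i}A+t\,\partial_{s_i}B=t\,\partial_{s_i}B$ each carry a factor $t$ for $i=1,\dots,k$. Factoring $t$ out of each of these $k$ columns gives $\Phi_{s_0}(t)=t^{k}\,\Psi(t)$ for a holomorphic $\Psi$, whence $t=0$, i.e. the point $P$, is a zero of $\Phi_{s_0}$ of order at least $k$, so $P$ is a focus of multiplicity at least $k$. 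I expect the main obstacle to be the bookkeeping that makes this parametrization legitimate: one must ensure that a general $\ell_{s_0}\in\sX'$ is a good fibre of the filling family (so that $\Phi_{s_0}\not\equiv 0$ and the focal scheme on it is finite of the expected degree), and that $P$ lies in the affine chart used. Finally, the residual factor $\Psi(0)=\det[\,P\mid B\mid \partial_{s_1}B\mid\cdots\mid\partial_{s_k}B\mid \partial_{s_{k+1}}A\mid\cdots\mid\partial_{s_n}A\,]$ is generically nonzero, which pins the multiplicity down to exactly $k$.
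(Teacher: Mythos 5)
The paper offers no proof of this lemma at all---it is quoted as a classical result of Segre, with a citation---so your argument cannot be compared against an internal one; what you have written is essentially the classical local computation underlying Segre's statement, and its core is correct. Parametrizing the fibres as $[A(s)+tB(s)]$, the focal scheme on $\ell_s$ is indeed cut out by $\Phi_s(t)=\det\bigl[A\mid B\mid \partial_{s_1}A+t\,\partial_{s_1}B\mid\cdots\mid\partial_{s_n}A+t\,\partial_{s_n}B\bigr]$, of degree at most $n$ in $t$ (consistent with Lemma \ref{lem: degree focal scheme}), and the normalization $A\equiv P$ along the $k$-dimensional base $S'$ is legitimate: writing $P=\alpha(s)A_0(s)+\beta(s)B_0(s)$ along $S'$ for a local frame $(A_0,B_0)$ of the family and extending $\alpha,\beta$ off $S'$ produces such an $A$. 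Then the $k$ columns corresponding to directions tangent to $S'$ acquire a factor $t$, so $\Phi_{s_0}(t)=t^k\Psi(t)$ and $P$ is a focus of multiplicity at least $k$; as for the bookkeeping issue you flag, in every place the paper invokes the lemma (Lemma \ref{lem:case123}, the proof of Theorem \ref{thm:main}, Proposition \ref{prop:trasposizioni}) the line is general in a filling family, so $\Phi_{s_0}\not\equiv 0$ and the focal divisor on $\ell_{s_0}$ is an honest degree-$n$ divisor by Lemma \ref{lem: degree focal scheme}. The one overreach is your final sentence: the nonvanishing of $\Psi(0)$ is not automatic and you give no argument for it, since for special subfamilies the multiplicity of $P$ can exceed $k$; thus what you have actually proved is multiplicity \emph{at least} $k$. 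This is harmless: despite the wording of the statement, the paper only ever uses the lemma in the form ``focus of multiplicity at least $k$'' (e.g.\ ``multiplicity at least $n-1$'' in Lemma \ref{lem:case123}), which is also the correct classical formulation---so simply drop the exactness claim rather than try to justify it.
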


When the integer $k$ used in the previous lemma is greater or equal than $1$, $P$ is also called a fundamental point. We recall the following result on filling families of lines given by the join of two subvarieties.

\begin{lem} \cite[Lemma 3.8]{ccm} \label{lemma:depoi}
Let $F$ be a subvariety of $\bP^{n+1}$ of codimension $2$, and $\sC\nsubseteq F$ be a curve not contained in a $\bP^{n-1}$. Assume that the family $\cX$ of lines joining $\sC$ and $F$ is filling. Then $F$ is linear.
\end{lem}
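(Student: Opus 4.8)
The plan is to analyse the focal locus of $\cX$ on a general line, show it is concentrated at the two base points, and then read off from the exact multiplicity of the focus lying on $F$ that the second fundamental form of $F$ must degenerate, so that $F$ is linear. I would fix a parametrisation $S\subseteq \sC\times F$, $s=(p,y)\mapsto \ell_s=\overline{py}$, so that $\dim S=1+(n-1)=n$, matching the filling hypothesis. Since $\cX$ is filling, Lemma \ref{lem: degree focal scheme} tells me the focal scheme on a general line $\ell_s$ has degree $n$. Through $p\in\sC$ passes the $(n-1)$-dimensional subfamily of lines joining $p$ to $F$ (the rulings of the cone over $F$ with vertex $p$), so by Lemma \ref{lem:fundpoint} the point $p$ is a focus of multiplicity at least $n-1$; likewise the $1$-dimensional subfamily of lines joining $y$ to $\sC$ shows that $y$ is a focus of multiplicity at least $1$. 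As these two contributions already sum to $n$, the focal scheme on $\ell_s$ is exactly $(n-1)\,p+y$; in particular $p$ is a focus of multiplicity precisely $n-1$ and there is no residual focus elsewhere on $\ell_s$.

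The next step is to make the focal determinant explicit and extract the local structure at the fat focus $p$. Writing $\mathrm{d}f$ in the product coordinates on $S$ (tangent directions along $\sC$ versus along $F$) and a point $x=\alpha p+\beta y$ on $\ell_s$, the determinant governing focality factors as a constant multiple of $\alpha\,\beta^{\,n-1}$, the constant being $\det[p,y,v,w_1,\dots,w_{n-1}]$, where $v$ spans $T_p\sC$ and $w_1,\dots,w_{n-1}$ span $T_yF$ modulo $\langle p,y\rangle$. The filling hypothesis forces this constant to be nonzero, i.e. $T_p\sC$ and $T_yF$ are in general position; this is precisely where the assumption $\sC\nsubseteq\bP^{n-1}$ is used, since a curve confined to a $\bP^{n-1}$ would force all its tangent lines into a fixed hyperplane and destroy this general position for a generic choice of $y$. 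I would then restrict to the cone over $F$ with vertex $p$, letting $y$ vary along $F$ while keeping $p$ fixed, and examine how the multiplicity-$(n-1)$ focus at $p$ deforms across this $(n-1)$-dimensional subfamily. The rigidity forced above—that the focus at $p$ stays concentrated of constant multiplicity $n-1$, with no splitting into lower-multiplicity foci and no focus migrating along the ruling—translates into the vanishing of the second fundamental form of $F$ at the general point $y$, equivalently the degeneracy of the Gauss map of $F$.

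To conclude, I would invoke the classical fact that an irreducible reduced variety with degenerate (constant) Gauss map which is not a cone is a linear subspace; since $\dim F=n-1$ this forces $F$ to be a $\bP^{n-1}$, as claimed. The main obstacle is the middle step: the first-order focal count on a single line is insensitive to the curvature of $F$, so the essential content of the argument is the second-order analysis that converts the exact multiplicity $n-1$ of the focus at $p$, as the line sweeps the full $n$-dimensional family, into the vanishing of the second fundamental form of $F$. Carrying this out uniformly over a general $p\in\sC$, and ruling out the degenerate tangential configurations excluded precisely by $\sC\nsubseteq\bP^{n-1}$, is where the real work lies; one expects this to be exactly the point at which the refined focal estimates of \cite{CF} and \cite{ccm} are needed.
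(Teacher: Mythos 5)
Your first two paragraphs are essentially correct, and they reproduce the standard opening move of the focal machinery of Lemma~\ref{lem: degree focal scheme} and Lemma~\ref{lem:fundpoint}: parametrizing the join by $\sC\times F$, the focal divisor on a general line $\ell=\overline{py}$ is cut out by $\alpha\beta^{n-1}D$ with $D=\det[p,\dot p,y,w_1,\dots,w_{n-1}]$, the family (of dimension $n$) is filling exactly when $D\not\equiv 0$, and in that case the focal scheme on $\ell$ is exactly $(n-1)p+y$. The proposal stops being a proof at the very next step. The assertion that the ``rigidity'' of the focal multiplicities --- no splitting of the multiplicity-$(n-1)$ focus, no migration along the ruling --- translates into the vanishing of the second fundamental form of $F$ is not argued, and as formulated it is vacuous: the focal divisor equals $\alpha\beta^{n-1}D$ \emph{identically in the parameters of the family}, for every filling join, whatever the curvature of $F$. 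There is nothing that could split or migrate when $F$ is curved, because second-order data of $F$ never enters $\mathrm{d}f$. Your own closing paragraph concedes that converting the focal count into vanishing of the second fundamental form ``is where the real work lies''; that work is the entire content of the lemma, and it is absent, so what you have is a plan, not a proof.

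Worse, the strategy cannot be completed, because the facts you establish do not imply the conclusion. Take $n=3$, let $\sC\subset\bP^4$ be the rational normal quartic $[1:t:t^2:t^3:t^4]$ and let $F$ be the smooth quadric surface $\{x_0x_3=x_1x_2\}\cap\{x_4=0\}$. The join is $3$-dimensional (a general line of it meets each of $\sC$ and $F$ only once), and it is dominant: for any point $x$ the cone over $\sC$ with vertex $x$ (dimension $2$) and the cone over $F$ with vertex $x$ (dimension $3$) meet along a cone of dimension at least $1$, which is a union of lines through $x$ meeting both $\sC$ and $F$; equivalently, a direct computation gives $D=t^6\neq 0$ at $y=[1:0:0:0:0]$. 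So this family is filling, and every statement of your first two paragraphs holds verbatim --- the focal scheme on the general line is exactly $2p+y$ --- yet $F$ is not linear and its second fundamental form is certainly nonzero. This also exposes a symptom already visible in your write-up: the hypothesis $\sC\nsubseteq\bP^{n-1}$ is never genuinely used. Your claim that it is what guarantees $D\neq 0$ is incorrect: $D\not\equiv 0$ is \emph{equivalent} to dominance of $f$, i.e.\ to the filling hypothesis, wherever $\sC$ lies (a conic spanning only a plane can perfectly well have its tangent lines skew to the general tangent plane of $F$). Note that the paper itself gives no proof of this statement (it cites \cite{ccm}), but the application in the proof of Theorem~\ref{thm:main} comes with extra structure: there the lines of $\widetilde{\cV_\sC}$ lie in the tangent cones of $X$ at their points on $F=X^\sing$, hence have contact order at least $2$ with $X$ at those points. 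Any correct proof must exploit input of this kind, beyond the first-order focal count of the abstract join, which is precisely the ingredient your proposal lacks. (The final reduction you invoke --- vanishing second fundamental form at general points implies linearity --- is fine, and the detour through degenerate Gauss maps and the ``not a cone'' caveat is unnecessary; but this is moot given the gap.)
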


\subsection{Intersection of lines with a hypersurface} \label{sec:preliminaries multiplicity}
Consider a degree $d$ integral hypersurface $X$ contained in $\bP^{n+1}$, and a line $\ell \nsubseteq X$. Denote by $P_1, \ldots, P_k$ the points in $X \cap \ell$, and by $m_i$ the multiplicity of $P_i$ in $X \cap \ell$. We have that that $\sum m_i=d$.

\begin{defn}
We call the \emph{contact order} of $\ell$ with $X$ at $P_i$ the number $m_i-1$, and we denote it by $\ord_{P_i}(\ell \cap X)$. 
\end{defn}

The line $\ell$ is \emph{transverse} to $X$ at $P_i$ if $\ord_{P_i}(\ell \cap X) = 0$, and \emph{tangent} to $X$ at $P_i$ if $\ord_{P_i}(\ell \cap X) \geq 1$. In the case of higher contact order, i.e. $\ord_{P_i}(\ell \cap X) \geq 2$, we say that the line $\ell$ is an \emph{asymptotic tangent} to $X$ at $P_i$. The line $\ell$ is called \emph{bitangent} to $X$ at two points $P_i \neq P_j$, if $\ell$ is tangent to $X$ at both points $P_i,P_j$. We say that $\ell$ is a \emph{simply tangent} if there is a unique tangent point $P_i\in \ell\cap X$ with $\ord_{P_i}(\ell \cap X) = 1$ and $\ell$ is transverse to $X$ for all the other $P_j\neq P_i$ in $\ell\cap X$. We can classify such behaviours by means of the following datum: 
$$\beta(\ell):=\sum_{P_i \in X \cap \ell}{\ord_{P_i}(\ell \cap X)}.$$ 
We know that $\beta(\ell)=0$ if and only if $\ell$ is transverse to $X$ at all the $P_i$, $\beta(\ell)=1$ if and only if $\ell$ is simply tangent to $X$, and $\beta(\ell)>1$ if and only if $\ell$ is \emph{multi--tangent} to $X$, namely is at least bitangent or asymptotic tangent to $X$ at some points.

\begin{lem}[\cite{ccm}, Proposition 4.1] \label{lem:focalnewtuttoinsieme}
Let $X \subset \bP^{n+1}$ be an integral hypersurface, and $\cX$ be a filling family of lines in $\bP^{n+1}$. Assume the general line $\ell \in \cX$ is tangent to $X$ at a general point $P$. 
Then, the point $P$ is a focus on $\ell$. If the contact order of $\ell$ with $X$ at $P$ is at least $2$, then the multiplicity of $P$ as a focus on $\ell$ is at least $2$.
\end{lem}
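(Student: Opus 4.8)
The plan is to realise foci as the ramification locus of the map $f\colon \cX \to \bP^{n+1}$ and to compute its differential explicitly along the section given by the tangency points. Since $\cX$ is filling, $\dim \cX = n+1 = \dim \bP^{n+1}$ and $f$ is dominant, hence generically finite; a point of $\cX$ is a focus exactly when $\mathrm{d}f$ degenerates there, and the multiplicity of a focus on a fixed fibre $\ell_s$ is the order of vanishing, along that fibre, of the determinant of $\mathrm{d}f$. I would parametrise the family locally around a general $s_0$ by coordinates $u_1,\ldots,u_n$ on $S$ and write the line $\ell_s$ in an affine chart as $t\mapsto P(s)+t\,v(s)$, where $P(s)$ is the tangency point and $v(s)$ a direction vector. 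The columns of the matrix $M(s,t)$ representing $\mathrm{d}f$ are then $\partial_t f = v(s)$ and $\partial_{u_i}f = \partial_i P(s) + t\,\partial_i v(s)$, so $\det M(s,t)$ is a polynomial of degree $\le n$ in $t$, in agreement with Lemma~\ref{lem: degree focal scheme}, and the point $P$ corresponds to $t=0$.

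For the first assertion I would choose the affine coordinates so that $P=P(s_0)$ is the origin and $T_PX=\{x_{n+1}=0\}$, i.e.\ $\nabla g(P)=(0,\ldots,0,1)$ for a local equation $g$ of $X$ (the general tangency point is smooth because $X$ is integral). Tangency of $\ell_{s_0}$ at $P$ means $v(s_0)\in T_PX$, and since $P(s)\in X$ for every $s$ we also have $\partial_i P(s_0)\in T_PX$. Thus every column of $M(s_0,0)$ has vanishing last entry, the last row of $M(s_0,0)$ is zero, and $\det M(s_0,0)=0$: the point $P$ is a focus.

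For the refinement I would differentiate in $t$ and evaluate at $t=0$, writing $\tfrac{\mathrm{d}}{\mathrm{d}t}\det M(s_0,t)\big|_{t=0}$ as a sum of $n$ determinants in which exactly one column $\partial_i P$ is replaced by $\partial_i v(s_0)$. Only the last entries of these replaced columns can be nonzero, and differentiating the first-order tangency identity $\nabla g(P(s))\cdot v(s)\equiv 0$ along $u_i$ gives, with the normalisation above, $(\partial_i v)_{n+1} = -(\partial_i P)^{\mathsf T} H\, v$, where $H=\mathrm{Hess}(g)(P)$. Expanding each determinant along the last row and using that $v=\sum_i a_i\,\partial_i P$ (here I use that the tangency points dominate $X$, so the $\partial_i P$ form a basis of $T_PX$ and the top $n\times n$ minor $D$ of $[\partial_1 P,\ldots,\partial_n P]$ is nonzero), the signed sum collapses to $(-1)^n D\,\big(v^{\mathsf T} H\, v\big)$. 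Since the contact order of $\ell$ with $X$ at $P$ is at least $2$ precisely when $v^{\mathsf T} H\, v=0$, this derivative vanishes, $\det M(s_0,t)$ has a zero of order at least $2$ at $t=0$, and $P$ is a focus of multiplicity at least $2$.

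\textbf{Main obstacle.} The delicate point is the bookkeeping in the last step: one must verify that the normal components of $\partial_i v$ are governed by the Hessian of $g$ and that the signed sum of minors reassembles exactly the second fundamental form $v^{\mathsf T} H\, v$, so that higher contact order translates verbatim into higher focal multiplicity. One should also keep careful track of the generality hypotheses—that $P$ is a smooth point of $X$ and that $s\mapsto P(s)$ dominates $X$—which guarantee both that $T_PX$ is well defined and that $D\neq 0$, ensuring the surviving minor does not vanish.
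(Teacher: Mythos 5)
Your proof is correct, and it is essentially the standard argument: note that this paper does not actually prove the lemma (it imports it from \cite{ccm}, Proposition 4.1), and the proof there is precisely this kind of local focal-matrix computation — parametrize the fibre as $t\mapsto P(s)+t\,v(s)$, observe that tangency ($v,\partial_iP\in T_PX$) kills $\det \mathrm{d}f$ at $t=0$, and differentiate the identity $\nabla g(P(s))\cdot v(s)\equiv 0$ so that the Hessian term $v^{\mathsf T}Hv$ controls the next order of vanishing, which is exactly the contact-order condition. Two small points worth making explicit: the paper's definition of the focal locus as the ``support of the kernel of $\mathrm{d}f$'' must be read, as you do, as the degeneracy (ramification) scheme of $\mathrm{d}f$ with fibrewise multiplicity the vanishing order of $\det \mathrm{d}f\big|_{\ell_s}$ — otherwise the sheaf kernel of a generically finite map would be zero — and your use of the hypothesis that the tangency point is \emph{general} (so $s\mapsto P(s)$ dominates $X$, the $\partial_iP$ span $T_PX$, and $P$ is smooth) is exactly where that hypothesis enters; both are handled correctly in your write-up.
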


We now prove a result using a local computation on the tangent cone that we will use in the proof of the main Theorem. First, we recall the definition of the tangent cone.

\begin{defn}\cite[Lecture 20]{harrisAG} \label{defn:vartanglines}
Consider a hypersurface $X\subset \bP^{n+1}$ and a point $P \in X$. Choose an affine neighborhood of $P$, where $P$ is the origin. Here $X$ is described by the vanishing of a certain polynomial $f:=f_m+f_{m+1}+ \cdots$, where $f_k$ is homogeneous of degree $k$, and $m$ is the smallest integer such that $f_m$ is not identically zero. The \emph{tangent cone} to $X$ at the point $P$, denoted by $C_P(X)$, is the hypersurface of degree $m$ in $\bP^{n+1}$ described by $\{f_m=0\}$.
\end{defn}

\begin{lem} \label{lem:localcomputationtangentcone}
Let $P$ be a singular point of a reduced and irreducible hypersurface $X \subset \bP^{n+1}$. Consider a hyperplane $H$ containing $P$ and a line $\ell \subset H$. If $\ell \in C_P(X)$, then there is a component $Y$ of $X \cap H$ such that $Y^\red$ is tangent to $\ell$ at $P$. 
\end{lem}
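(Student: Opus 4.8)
The plan is to identify the tangent cone of the hyperplane section $X\cap H$ at $P$ with the restriction of $C_P(X)$ to $H$, and then to spread the resulting incidence over the irreducible components of $X\cap H$. First I would work in an affine chart in which $P$ is the origin and $H=\{x_{n+1}=0\}$, and write the local equation of $X$ as $f=f_m+f_{m+1}+\cdots$ with $m=\mult_P(X)\geq 2$, so that $C_P(X)=\{f_m=0\}$. Writing $v$ for a direction vector of $\ell$, the hypothesis $\ell\subset C_P(X)$ becomes $f_m(v)=0$, while $\ell\subset H$ means $v\in H$.

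The core computation is the determination of $C_P(X\cap H)$. The section has local equation $g:=f|_H=f_m|_H+f_{m+1}|_H+\cdots$, and its lowest nonvanishing homogeneous form is exactly $f_m|_H$ as soon as $f_m|_H\not\equiv 0$; equivalently $\mult_P(X\cap H)=m$ and $C_P(X\cap H)=\{f_m|_H=0\}=C_P(X)\cap H$. Granting this, since $v\in H$ and $f_m(v)=0$ we obtain $f_m|_H(v)=0$, that is $\ell\subset C_P(X\cap H)$.

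Next I would distribute this containment over components. Factoring $g=\prod_i g_i^{a_i}$ into irreducibles, with $Y_i:=\{g_i=0\}$ the components of $X\cap H$ passing through $P$, additivity of the order of vanishing shows that the lowest form of $g$ is, up to the exponents $a_i$, the product of the lowest forms of the $g_i$, whence $C_P(X\cap H)=\bigcup_i C_P(Y_i^\red)$ as sets. As $\ell$ is irreducible and contained in this finite union of closed cones, it lies in a single $C_P(Y_j^\red)$. Finally, $\ell\subset C_P(Y_j^\red)$ forces the intersection multiplicity of $\ell$ with $Y_j^\red$ at $P$ to exceed $\mult_P(Y_j^\red)\geq 1$, hence to be at least $2$; thus $\ord_P(\ell\cap Y_j^\red)\geq 1$ and $Y_j^\red$ is tangent to $\ell$ at $P$, as required.

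The step I expect to be the main obstacle is the identification $C_P(X\cap H)=C_P(X)\cap H$, which is exactly the delicate local analysis of the tangent cone alluded to in the introduction: it holds precisely when $f_m|_H\not\equiv 0$, that is when $H$ is not a linear component of $C_P(X)$. In the degenerate case $H\subset C_P(X)$ the section acquires strictly larger multiplicity $\mult_P(X\cap H)>m$ at $P$, the comparison above breaks down, and the hypothesis $\ell\subset C_P(X)$ carries no information for lines in $H$. I would resolve this by choosing $H$ general among the hyperplanes through $\ell$ — a general such $H$ is not contained in the fixed hypersurface $C_P(X)$ — which reduces the lemma to the main case above; a prescribed $H$ lying inside the tangent cone would instead demand a direct study of the leading form of $f|_H$ to locate the tangent component.
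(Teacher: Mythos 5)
Your main case is correct and is in substance the paper's own proof: the paper restricts to a general plane $K\cong\bP^2$ with $\ell\subset K\subset H$ and invokes exactly the compatibility you establish (``$C_P(X)\cap K$ describes $X\cap K$ locally around $P$''), then passes to components; you run the same comparison of leading forms directly on $H$, and your final estimate $i_P(\ell,Y_j^\red)\geq \mult_P(Y_j^\red)+1\geq 2$ is the standard and correct way to convert $\ell\subset C_P(Y_j^\red)$ into tangency.

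However, the degenerate case you flag, $H\subset C_P(X)$, is a genuine gap in your proof of the lemma \emph{as stated}, and your proposed repair does not close it: $H$ is prescribed in the statement and the conclusion concerns the components of $X\cap H$ for that particular $H$, so you are not free to replace $H$ by a general hyperplane through $\ell$. Two remarks put this in context. First, the paper's own proof has the identical defect: its plane $K$ is general only \emph{inside} $H$, so if $H\subset C_P(X)$ then every admissible $K$ is contained in $C_P(X)$, and the claim that $C_P(X)\cap K$ computes the tangent cone of $X\cap K$ at $P$ fails. Second, in this degenerate case the statement itself is false. Take $X=\{xzw+x^3+y^3+z^3=0\}\subset\bP^3$, which is irreducible and reduced, and $P=[0:0:0:1]$, so that $C_P(X)=\{xz=0\}$; let $H=\{z=0\}\subset C_P(X)$ and $\ell=\{y=z=0\}$. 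Then $\ell\subset C_P(X)$, but $X\cap H=\{x^3+y^3=0\}$ consists of three distinct reduced lines through $P$, each meeting $\ell$ transversally at $P$, so no component of $X\cap H$ is tangent to $\ell$ at $P$. The lemma therefore needs the additional hypothesis $H\not\subset C_P(X)$ (which holds for a general $H$, and in particular in the paper's application of the lemma in case (C4) of the proof of the main theorem, where the hyperplane is general through a component of $X^\sing$); with that hypothesis added, your argument is complete, and it identifies more precisely than the paper's proof where the nondegeneracy is used.
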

\begin{proof}
We restrict ourselves to a general $K \cong \bP^2$, $\ell \subset K \subset H$. We know that $\ell$ is one of the lines of $C_P(X) \cap K$. Since the tangent cone $C_P(X)$ describes $X$ locally around $P$, $C_P(X) \cap K$ describes $X \cap K$ locally around $P$, so there exist a component $Y_K$ of $X \cap K$ such that $Y_K^\red$ is tangent to $\ell$ at $P$ in $K$. As a consequence, we can choose as $Y$ any component  of $X \cap H$ containing $Y_K$.
\end{proof}

\subsection{Monodromy group of projections}
Let $\pi: X \to Y$ a generically finite, dominant morphism of degree $d$, with $X$ and $Y$ irreducible projective varieties. The morphism $\pi$ induces a map
$$\mu: \pi_1(U,y) \to \aut(\pi^{-1}(y)) \simeq S_d$$
where $U$ is a Zariski open subset of $Y$ over which $\pi$ is étale.
\begin{defn}
The image $M(\pi)=\mu \left(\pi_1(U) \right)$ is called \emph{monodromy group} of $\pi$. 
\end{defn}
The monodromy group is isomorphic to the Galois group (\cite[Sec I]{H}), i.e. the Galois group of the field extension $K/k(Y)$, where $K$ is the Galois closure of the field extension $k(X)/k(Y)$. 
In particular, the monodromy group does not depend on the choice of the open set $U$. The group $M(\pi_P)$ is a transitive subgroup of the symmetric group since $X$ is irreducible. We recall some standard facts on transitive permutation groups (see for instance \cite[Ch.8]{Isaacs}). 
\begin{prop} \label{prop:generatedtranspositions}
Let $G$ be a transitive subgroup of $S_d$. Then the following hold
\begin{itemize}
    \item if $G$ is finitely generated and its generators are all transpositions but one, then $G=S_d$;
    \item if $G$ is primitive and contains a transposition then $G=S_d$. 
\end{itemize}
\end{prop}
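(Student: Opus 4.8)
The plan is to deduce both statements from a single combinatorial lemma: if $\Gamma$ is a connected graph on the vertex set $\{1,\dots,d\}$ and for every edge $\{i,j\}$ of $\Gamma$ the transposition $(i\,j)$ lies in a subgroup $H\le S_d$, then $H=S_d$. I would prove this lemma by choosing a spanning tree of $\Gamma$ and arguing by induction on $d$: removing a leaf $v$ together with its unique incident edge $\{v,w\}$, the remaining tree generates the full symmetric group on $\{1,\dots,d\}\setminus\{v\}$ by induction, and since $(v\,w)\in H$ the identity $(w\,u)(v\,w)(w\,u)=(v\,u)$ produces every transposition $(v\,u)$ as well; thus all transpositions lie in $H$ and $H=S_d$. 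Equivalently, connectivity lets one write an arbitrary transposition $(i\,j)$ as a product of conjugates of edge--transpositions along a path from $i$ to $j$.

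For the first assertion I would first reduce to the case in which $G$ is generated by transpositions. In the situation in which this proposition is applied, the generators arise as images $\mu(\gamma_1),\dots,\mu(\gamma_r)$ of the standard loops $\gamma_1,\dots,\gamma_r$ around the branch points of a cover of $\bP^1$, which satisfy $\gamma_1\cdots\gamma_r=1$; if all but one of the $\mu(\gamma_i)$ are transpositions, then the exceptional generator equals the inverse of the product of the others and hence already lies in the subgroup they generate. Thus $G=\langle \tau_1,\dots,\tau_m\rangle$ with each $\tau_k=(a_k\,b_k)$ a transposition. I would then form the graph $\Gamma$ on $\{1,\dots,d\}$ whose edges are the pairs $\{a_k,b_k\}$. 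Since $G$ is transitive and is generated by the $\tau_k$, any two indices are joined by a chain of edges, so $\Gamma$ is connected; the lemma then gives $G=S_d$.

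For the second assertion, fix $(a\,b)\in G$ and define a graph $\Gamma'$ on $\{1,\dots,d\}$ by joining $i$ and $j$ exactly when $(i\,j)\in G$. The key point is that this graph is $G$--invariant: for $\sigma\in G$ one has $\sigma(i\,j)\sigma^{-1}=(\sigma i\,\sigma j)\in G$, so $\sigma$ carries edges to edges. Consequently the connected components of $\Gamma'$ form a $G$--invariant partition of $\{1,\dots,d\}$, that is, a system of blocks. Since $G$ is primitive this partition must be trivial, and because $\{a,b\}$ is an edge it cannot consist only of singletons; hence $\Gamma'$ is connected. All edge--transpositions of $\Gamma'$ belong to $G$ by construction, so the lemma again yields $G=S_d$.

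The part requiring the most care is the first reduction: the hypothesis that the generators are \emph{all transpositions but one} is only useful once one knows that the lone non--transposition generator is redundant, which is why the relation coming from the monodromy presentation is essential (without it the statement fails, as one sees already for $\langle(1\,2),(1\,3)(2\,4)\rangle\cong D_8<S_4$). For the second assertion the crux is instead the verification that the components of $\Gamma'$ are genuine blocks, after which primitivity does the rest. This is precisely the step where the two halves of the proposition diverge: the first relies on transitivity together with a transposition generating set, while the second rests on primitivity together with the mere presence of a single transposition.
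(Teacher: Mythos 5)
Your argument is correct, and there is nothing in the paper to compare it against directly: the proposition is never proved there, only quoted as a standard fact with a pointer to \cite[Ch.~8]{Isaacs}. Your spanning-tree lemma is precisely the elementary argument that citation stands for, and it cleanly handles both bullets --- transitivity gives connectivity of the edge graph in the first case, while in the second case primitivity forces connectivity because the connected components of your graph $\Gamma'$ are blocks.

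The most valuable part of your write-up is that you caught a genuine imprecision in the statement itself. The first bullet is false as literally written: $\langle (1\,2),(1\,3)(2\,4)\rangle$ is a transitive copy of $D_8$ inside $S_4$, generated by one transposition and one further element, and it is not $S_4$. What rescues the statement --- and what the paper tacitly assumes --- is that in every application the generators are the images under $\mu$ of the standard loops of $\pi_1(\bP^1\setminus\{p_1,\dots,p_r\})$, which satisfy $\gamma_1\cdots\gamma_r=1$; the lone non-transposition is then redundant, so $G$ is in fact generated by transpositions and your lemma applies. This is exactly how the proposition is used, through Proposition \ref{prop:cyclestructure}, in Lemma \ref{prop:semplice 2 multitangent} and in case (C4) of the proof of Theorem \ref{thm:main}, so your corrected formulation (transitive and generated by transpositions implies $S_d$, with redundancy of the exceptional generator supplied by the monodromy relation) is the statement the paper actually needs. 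The second bullet, and your proof of it, are correct as pure group theory with no such caveat.
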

\begin{rmk}\label{remark:decomposable}
A map $\pi_P$ is \emph{decomposable} if it factors non birationally over a open subset, see \cite[Definition 2.1]{ps}. If $\pi_P$ is indecomposable, then the monodromy group $M(\pi_P)$ is primitive, see \cite[Remark 2.2]{ps}. 
\end{rmk}
The following Bertini-type result will be used various times in the following. It can be useful for instance to find a set of generators for the monodromy group.

\begin{prop} \label{prop:monodromygeneralsection}
Let $X$ be an irreducible variety in $\bP^{n+1}$ of dimension $n\geq 2$. Let $H \simeq \bP^2$ be a general plane and $X_H:= X \cap H$. Then, for a point $P\in H$ we have
\begin{equation*} 
\xymatrix{
X_H \ar[d]_{{\pi_P}_{|X_H}}  \ar@{^{(}->}[r]^i & X \ar[d]^{\pi_P} & \\
\bP^1  \ar@{^{(}->}[r]^i &  \bP^n.
}
\end{equation*}
As a consequence we have $M({\pi_P}_{|X_H}) \leq M(\pi_P)$, which in term of non-uniform points gives that
$$\cW(X) \cap H \subseteq \cW(X_H).$$
\end{prop}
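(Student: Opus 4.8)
The plan is to reduce the whole statement to a single geometric observation: a point of $X$ whose image under $\pi_P$ lands on the line $\bP^1 \subset \bP^n$ must already lie in the plane $H$. Once this is in hand, everything else is formal functoriality of the monodromy representation together with Bertini-type genericity.

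First I would pin down the geometry. Since $\dim X = n \geq 2$, for general $H$ the section $X_H = X \cap H$ is an irreducible curve by Bertini, and when $P \in X^\sm$ a general plane $H$ through $P$ meets $X$ smoothly at $P$, so that $P$ is a smooth point of $X_H$ as well. The projection $\pi_P$ carries the pencil of lines through $P$ lying in $H$ isomorphically onto a line $\bP^1 \subset \bP^n$, and by construction ${\pi_P}_{|X_H}$ is nothing but the restriction of $\pi_P$ to $X_H$ followed by the inclusion $\bP^1 \hookrightarrow \bP^n$; hence the square commutes tautologically. Both vertical maps have the same degree $\delta$, equal to $d$ if $P \notin X$ and to $d-1$ if $P \in X^\sm$, since a general fibre of ${\pi_P}_{|X_H}$ over $y \in \bP^1$ is $X_H \cap \ell_y = X \cap \ell_y$ for the corresponding line $\ell_y \subset H$ through $P$.

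The heart of the argument is the identification of the two coverings. Let $U \subseteq \bP^n$ be an open set over which $\pi_P|_X$ is étale and set $U' := \bP^1 \cap U$, a nonempty open subset of $\bP^1$ for general $H$. I claim that
$$(\pi_P|_X)^{-1}(U') = ({\pi_P}_{|X_H})^{-1}(U') \quad \text{as coverings of } U'.$$
Indeed, if $x \in X$ has $\pi_P(x) \in \bP^1$, then $x$ lies on a line $\ell$ through $P$ with $\ell \subset H$, whence $x \in H$ and $x \in X \cap H = X_H$. In particular ${\pi_P}_{|X_H}$ is also étale over $U'$, so its monodromy may be computed there.

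From this the inclusion is immediate. The inclusion $U' \hookrightarrow U$ induces $\pi_1(U') \to \pi_1(U)$, and because the covering over $U'$ is the pullback of the covering over $U$, the monodromy representation of ${\pi_P}_{|X_H}$ factors as $\pi_1(U') \to \pi_1(U) \xrightarrow{\mu} S_\delta$, where $\mu$ is the monodromy representation of $\pi_P|_X$ (the monodromy being independent of the chosen étale locus). Thus $M({\pi_P}_{|X_H}) = \mu(\im(\pi_1(U') \to \pi_1(U))) \leq \mu(\pi_1(U)) = M(\pi_P|_X)$, both viewed inside the same $S_\delta$. Consequently, if $P \in \cW(X) \cap H$ then $M(\pi_P|_X) \neq S_\delta$, and a fortiori $M({\pi_P}_{|X_H}) \neq S_\delta$, i.e. $P \in \cW(X_H)$, which is the asserted inclusion $\cW(X) \cap H \subseteq \cW(X_H)$. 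The only delicate point is the bookkeeping of genericity—choosing $H$ so that $X_H$ is irreducible, $U'$ is nonempty, and $P$ stays smooth on $X_H$ in the inner case—which is exactly what guarantees that the two monodromy groups live inside one and the same symmetric group; once this is arranged, the coincidence of coverings over $U'$ makes the rest purely formal.
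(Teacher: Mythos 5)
The paper states this proposition without any proof, treating it as a standard Bertini-type fact, so there is no in-paper argument to compare yours against; judged on its own, your proof is correct and is exactly the natural argument one would supply. The key observation --- that $\pi_P^{-1}(\bP^1)=H\setminus\{P\}$, so the covering induced by $\pi_P|_X$ over $U'=\bP^1\cap U$ literally coincides with the covering induced by ${\pi_P}_{|X_H}$, after which functoriality $\pi_1(U')\to\pi_1(U)\to S_\delta$ gives $M({\pi_P}_{|X_H})\leq M(\pi_P|_X)$ inside one symmetric group, and the degree bookkeeping ($\delta=d$ or $d-1$) converts non-uniformity for $X$ into non-uniformity for $X_H$ --- is the right decomposition of the statement. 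Two points deserve slightly more care than your closing remark on ``bookkeeping of genericity'' gives them, because the genericity of $H$ is chosen \emph{before} the point $P$, which then ranges over all of $\cW(X)\cap H$. First, $U'\neq\emptyset$ must hold for \emph{every} $P\in H$, not just a general one; this is true, but uses that for general $H$ the curve $X_H$ is reduced and irreducible and that in characteristic zero a reduced plane curve has only finitely many tangent lines through any fixed point (its dual curve contains no line, by biduality), so the pencil of lines in $H$ through $P$ always contains lines transverse to $X$ and avoiding $X^\sing$. Second, in the inner case $P\in X^\sm$ the map $\pi_P|_X$ is only rational at $P$ and is resolved by blowing up (Remark \ref{rmk:tangentisemplici}); your set-theoretic identification of fibres is cleanest if one additionally shrinks $U$ to avoid $\pi_P(T_PX\setminus\{P\})$, so that no points of the exceptional divisor appear in the fibres --- legitimate since, as you note, the monodromy group is independent of the choice of \'etale open set, and one also needs that $P$ remains a smooth point of $X_H$ for every $P\in X^\sm\cap H$ (true for general $H$ by a dimension count on the incidence variety of pairs $(x,H)$ with $x\in H\subseteq T_xX$). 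Neither issue is a gap in the strategy; both are routine completions of an otherwise sound and complete argument.
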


The construction of the monodromy group associates a permutation to a generator of the fundamental group, which, thanks to the previous proposition can be associated with a branch point. The following proposition describes the cycle structure of the permutation depending on the structure of the fibre of the projection.

\begin{prop}\cite[Prop. 2.5]{ps}\label{prop:cyclestructure}
If $\sum_{j=1}^k m_jP_j$ denotes the scheme theoretic fibre of $\pi$ over a branch point, then the associated permutation have a cycle structure $(m_1,\ldots, m_k)$.
\end{prop}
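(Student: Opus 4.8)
The plan is to reduce the statement to a purely local, analytic computation around the branch point and to invoke the standard description of the monodromy of a covering of a punctured disc. Write $b$ for the branch point, and let $\gamma$ be the loop in the étale locus $U$ encircling $b$ whose class generates the local fundamental group; the permutation we must analyse is $\mu(\gamma)$. First I would choose a small analytic disc $D \ni b$ meeting the branch locus only at $b$ (when $\dim Y > 1$, take $D$ to be a general disc transverse to the branch divisor at a smooth point $b$ of it), and set $D^* = D \setminus \{b\}$, so that $\pi_1(D^*) \cong \bZ$ is generated by $\gamma$. Over $D^*$ the map $\pi$ is an unramified topological covering of degree $d$, and $\mu(\gamma)$ is exactly its local monodromy.

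Next I would decompose the preimage. For $D$ small enough, $\pi^{-1}(D) = V_1 \sqcup \cdots \sqcup V_k$ splits into connected components, one component $V_j$ around each point $P_j$ of the set-theoretic fibre over $b$. Restricting, each $V_j^* := V_j \cap \pi^{-1}(D^*) \to D^*$ is an unramified covering, and the $d$ sheets of $\pi$ over a base point of $D^*$ are partitioned into the blocks lying over the various $V_j$; the loop $\gamma$ permutes the sheets within each block and never mixes two different blocks. Thus $\mu(\gamma)$ is a product of disjoint cycles, one supported on each block. The degree of $V_j^* \to D^*$ equals the local degree of $\pi$ at $P_j$; since the base is a smooth curve and $\pi$ is finite, $\pi$ is flat, and this local degree coincides with the length $m_j = \operatorname{length}_{P_j}(\pi^{-1}(b))$ of the scheme-theoretic fibre at $P_j$. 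Hence the $j$-th block has exactly $m_j$ sheets, and $\sum_j m_j = d$.

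It remains to see that each block contributes a single $m_j$-cycle, i.e. that $V_j^* \to D^*$ is connected. Since the monodromy group is a function-field invariant, I may replace $\pi$ by its composition with the normalization and assume the source is smooth at each $P_j$; there $\pi$ has the analytic normal form $z \mapsto z^{m_j}$, with $m_j$ the ramification index (which agrees with the fibre length), so $V_j$ is a disc and $V_j^* \to D^*$ is the connected cover $z \mapsto z^{m_j}$, whose monodromy around $\gamma$ is a single $m_j$-cycle. Assembling the $k$ blocks, $\mu(\gamma)$ has cycle type $(m_1, \ldots, m_k)$, as claimed. The main obstacle is precisely this last point: guaranteeing that each $P_j$ yields one cycle of length exactly $m_j$ rather than splitting into several shorter cycles. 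This is where the local unibranch structure enters, and it is the reason the reduction to the normalization (which leaves $M(\pi)$ unchanged while turning the $m_j$ into honest ramification indices) is the cleanest way to close the argument.
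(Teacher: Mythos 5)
The paper itself gives no proof of this proposition (it is quoted from \cite{ps}), so your argument must stand on its own. Its first three steps do: restricting to a disc $D$ transverse to the branch locus at $b$, splitting $\pi^{-1}(D)$ into blocks $V_j$ around the points $P_j$, and counting $m_j$ sheets in each block via flatness of a finite map over a smooth curve germ is exactly the standard route. The genuine gap is in the last step, the one you yourself identify as the crux. Replacing $\pi$ by $\pi\circ\nu$, with $\nu\colon \tilde{X}\to X$ the normalization, does preserve the permutation $\mu(\gamma)$, but it does not let you assume the source is smooth at $P_j$ \emph{with the same} $m_j$: the normalization may have several points $P'_1,\dots,P'_r$ over $P_j$, one for each analytic branch of $X$ at $P_j$, and the fibre lengths are related by $m_j=e_{P'_1}+\dots+e_{P'_r}$, where $e_{P'_i}$ is the ramification index upstairs (tensor $0\to\cO_{X,P_j}\to\bigoplus_i\cO_{\tilde{X},P'_i}\to\Delta\to 0$ with the residue field of $\cO_{Y,b}$ and use that $\Delta$ has finite length). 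Your normal-form computation therefore shows that the block over $P_j$ contributes $r$ cycles of lengths $e_{P'_1},\dots,e_{P'_r}$, not a single $m_j$-cycle: the connectedness of $V_j^*\to D^*$ is precisely what fails. Concretely, if $X$ has a node at $P_j$ whose two branches are both unramified over $b$, then $m_j=2$ while the local monodromy fixes the two sheets.

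This is not a repairable slip in the write-up: it shows that the statement, read literally for a finite map whose source is singular at some $P_j$, is false, so any proof must invoke the hypothesis that makes it true, namely that the source is smooth (equivalently, unibranch) at every point of the fibre, so that each $m_j$ is an honest ramification index. That is the setting in which \cite{ps} states and uses the result --- there the projection of a plane curve is precomposed with its normalization, just as this paper precomposes with a blow-up in Remark \ref{rmk:tangentisemplici} --- and it is the only setting in which the paper applies it (tangency at smooth points of $X$). Under that reading your disc argument with the local model $z\mapsto z^{m_j}$ is already complete, and the normalization detour is unnecessary; as written, however, the detour proves a different (branch-indexed) cycle structure than the one claimed.
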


In particular, simple branch points correspond to transpositions in the monodromy group \cite[Lemma, pag. 698]{H}.

By combining Proposition \ref{prop:generatedtranspositions} and Proposition \ref{prop:cyclestructure} we get the following property of non--uniform points in the case of planar curves, that we will use in the proof of the main results.   

\begin{lem}\label{prop:semplice 2 multitangent}
Let $C \subset \bP^2$ be an irreducible and reduced curve and. If $P \in \cW(C)$ is a non--uniform point, then there are at least two lines multi--tangent to $C$ passing through $P$. 
\end{lem}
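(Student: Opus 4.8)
The plan is to exploit the connection between non-uniformity and the existence of transpositions in the monodromy group, using the preceding propositions to translate a statement about the permutation group into a geometric statement about tangent lines. Consider the projection $\pi_P|_C : C \to \bP^1$ from the point $P$, which is a finite map of degree $d$ (if $P\notin C$) or $d-1$ (if $P\in C^\sm$). By Proposition \ref{prop:cyclestructure}, the branch points of this map correspond to the lines through $P$ meeting $C$ non-transversally, and a line $\ell$ through $P$ contributes a transposition precisely when $\ell$ is simply tangent to $C$, i.e. $\beta(\ell)=1$. A multi-tangent line, with $\beta(\ell)>1$, contributes a permutation that is either a longer cycle or a product of several disjoint transpositions, and so is not a single transposition.

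First I would argue by contraposition: I assume that $P$ has at most one multi-tangent line passing through it, and show that $P$ is uniform, i.e. $M(\pi_P|_C)$ is the full symmetric group. Under this assumption, every branch line $\ell$ through $P$ with $\beta(\ell)\geq 1$ is simply tangent — contributing a transposition — with at most one exception (the possible single multi-tangent line). The generic-position argument guarantees there genuinely are branch points (since $C$ has degree at least $2$; in the degree-one case there is nothing to prove as a line is not a counterexample to the conjecture), so the monodromy group is generated by these permutations coming from the various branch points. Thus $G = M(\pi_P|_C)$ is a transitive subgroup of the symmetric group, finitely generated by permutations that are all transpositions except for at most one.

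At this point I would invoke the first bullet of Proposition \ref{prop:generatedtranspositions}: a transitive, finitely generated subgroup of $S_d$ whose generators are all transpositions but one must equal $S_d$. This forces $G=S_d$ (or $S_{d-1}$ in the inner case), meaning $P$ is uniform, contradicting $P\in\cW(C)$. Hence $P$ must have at least two multi-tangent lines through it, which is the desired conclusion.

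The main obstacle I anticipate is handling the generators of the monodromy group cleanly. Proposition \ref{prop:generatedtranspositions} requires that \emph{all} the generators except one be transpositions, so I must be careful to verify that the only non-transposition generator can arise from the single allowed multi-tangent line, and that every other branch point indeed gives a genuine transposition rather than a more complicated cycle structure. This hinges on correctly reading off the cycle type from $\beta(\ell)$ via Proposition \ref{prop:cyclestructure}: a line with a single tangency point of contact order one (and transverse elsewhere) yields exactly one transposition, whereas a bitangent or asymptotic tangent yields a non-transposition generator. I would also need the standard transitivity of $M(\pi_P|_C)$, which follows from irreducibility of $C$, and I should ensure that the set of branch points obtained does generate the whole monodromy group — this is exactly the content guaranteed by the identification of generators of $\pi_1$ of the base with loops around branch points, so no separate difficulty arises there.
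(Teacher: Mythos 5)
Your proposal is correct and is essentially the paper's own argument: the paper obtains this lemma exactly by combining Proposition \ref{prop:generatedtranspositions} with the cycle-structure dictionary of Proposition \ref{prop:cyclestructure}, which is precisely your contrapositive (at most one multi-tangent line forces a transitive group generated by transpositions with at most one exception, hence the full symmetric group). One small caveat: your side remark that a bitangent or asymptotic tangent always yields a non-transposition generator fails for inner points (a line simply tangent at $P$ and at one other point is multi-tangent yet gives a transposition, cf. Remark \ref{rmk:tangentisemplici}), but your proof only uses the valid direction --- that every non-transposition generator comes from a multi-tangent line --- so the argument stands.
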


\begin{rmk}\label{rmk:tangentisemplici}
When $P \in X^\sm$ the map $\pi_P|_X$ is rational and not defined on $P$. We compose it with the blow up $\nu: Bl_PX \to X$ to get a morphism $Bl_PX \stackrel{\pi}{\to} \bP^n$ and the monodromy of $\pi_P|_X$ coincides with the monodromy of $\pi$. 
In particular, 
a line bitangent to $X$ being simply tangent at $P$ and in another point is associated to a transposition in the monodromy group of the projection $\pi_P$. 
\end{rmk}

\section{The locus of non--uniform points}
This section is devoted to the proof of the main results of this paper. As a quick preamble, we recall that there are two differences with respect to the arguments of \cite{ccm}: first, \cite[Theorem 1.2]{ccm} is improved, removing a rather technical case. This is a step forward in the direction of proving Conjecture \ref{conj:coni}, and also simplify the proofs of Theorem \ref{thm:projectionsmooth}, \ref{thm:transpositions}. Second, all the results are generalized also to $\cW(X)^\int$: we now state them with $\cW(X)$, which is the union of $\cW(X)^\out$ and $\cW(X)^\int$, and when it is required, we handle the two cases separately in the proofs.

We will need to work with a family of lines that are multi--tangent to $X$. For the technical part of the proof, we will need to select an appropriate subfamily, which we will call $\cV(X)$. It will be related to the lines corresponding to generators of the non-uniform monodromy groups as in Proposition \ref{prop:cyclestructure}. We will denote by $\cV(X)_P$ the family of multi--tangent lines to $X$ passing through $P$ which are still multi tangent at $X$ after removing the potential contribution from the point $P$. In the notation of Section \ref{sec:preliminaries multiplicity}, consider a line $\ell$ multi--tangent to $X$, so $\beta(\ell)>1$. If $P \notin X \cap \ell$, then trivially $P$ has no contribution in $\beta(\ell)$, so $\ell \in \cV(X)_P$. If $P \in X \cap \ell$, then we have $\ell \in \cV(X)_P$ if $$\sum_{P_i \in X \cap \ell, P_i \neq P}{\ord_{P_i}(\ell \cap X)}>1.$$

\begin{defn}
Let $\cV(X) \subset \bG(1,\bP^{n+1})$ be the family of lines in $\bP^{n+1}$ obtained by the union of $\cV(X)_P$ for $P \in \cW(X)$. If $\sC \subset \bP^{n+1}$ is an algebraic set, we will denote by $\cV_\sC$ the union of $\cV_P(X)$ for $P \in \sC$.
\end{defn}

We want to apply the results about the focal loci of the family $\cV(X)_\sC$, when $\sC$ is a special curve contained in $\cW(X)$. We remark that by our definition of tangency, the lines of $\cV(X)_P$ are not contained in $X$. The fact that the family is non--empty comes from the next two lemmas.

\begin{lem}
Let $X$ be an irreducible, reduced hypersurface in $\bP^{n+1}$ of degree greater than $1$. Let $P \in X^\sm$. Then the family of lines passing by $P$ and contained in $X$ has dimension smaller than $n-1$.
\end{lem}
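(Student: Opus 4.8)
The plan is to show that assuming the family has dimension $n-1$ would force $X$ to be a cone with vertex $P$, which is impossible at a smooth point once $\deg X \geq 2$. Denote by $F_P \subseteq \bG(1,\bP^{n+1})$ the family of lines through $P$ contained in $X$, and let $C_P := \bigcup_{\ell \in F_P} \ell$ be its union. First I would record that $C_P$ is a closed cone with vertex $P$ contained in $X$: it is closed because it is the image of the (proper) incidence variety $I := \{(\ell,x) : \ell \in F_P,\ x \in \ell\}$ under the second projection, it is contained in $X$ by the very definition of $F_P$, and it is a cone with vertex $P$ because every line it contains passes through $P$.

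Next I would run the dimension count on $I$. The projection $I \to F_P$ is surjective with fibres $\cong \bP^1$, so $\dim I = \dim F_P + 1$; the projection $I \to C_P$ is surjective and generically finite, since two distinct points $P \neq x$ determine a unique line and hence the fibre over a general $x \in C_P$ is a single point. Consequently $\dim C_P = \dim F_P + 1$ whenever $F_P$ is nonempty (if $F_P = \emptyset$ the statement is vacuous). Since $C_P \subseteq X$ and $\dim X = n$, this already gives $\dim F_P \leq n-1$, and the whole point is to rule out equality. If $\dim F_P = n-1$, then $\dim C_P = n$; as $C_P$ is a closed subvariety of the irreducible $X$ of the same dimension, we conclude $C_P = X$, i.e. $X$ is a cone with vertex $P$.

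The key input is then the elementary local fact that a hypersurface cone of degree $\geq 2$ is singular at its vertex: writing $X$ in affine coordinates centred at $P$, the defining polynomial can be taken independent of the vertex direction, so every partial derivative is homogeneous of degree $\deg X - 1 \geq 1$ in the remaining coordinates and therefore vanishes at $P$, forcing $P \in X^\sing$. This contradicts $P \in X^\sm$, so equality cannot occur and $\dim F_P \leq n-2 < n-1$, as claimed. The only mildly delicate points are the dimension count on $I$ and the implication $\dim C_P = \dim X \Rightarrow C_P = X$, which relies on the irreducibility of $X$; the cone--implies--singular--vertex computation is routine.
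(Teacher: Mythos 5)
Your proof is correct and takes essentially the same route as the paper: assume the family has dimension $n-1$, conclude that the union of the lines makes $X$ a cone with vertex $P$ (or a $\bP^n$), and derive a contradiction. You also make explicit a point the paper's very terse proof glosses over --- the cone case is excluded not by irreducibility and $\deg X \geq 2$ alone (irreducible cones of degree $\geq 2$ exist) but by the hypothesis $P \in X^\sm$, since the vertex of a hypersurface cone of degree $\geq 2$ is singular.
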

\begin{proof}
Assume by contradiction that the dimension is $n-1$. Then $X$ contains a $\bP^n$ or a cone with vertex $P$. This is not possible because $X$ is irreducible of degree greater than $1$.
\end{proof}

\begin{lem} \label{lem:dimension}
Let $X$ be an irreducible, reduced hypersurface of $\ \bP^{n+1}$, $n \geq 1$. Assume $Q \in \cW(X)$ general, Then the family $\cV_Q$ has dimension $n-1$.
\end{lem}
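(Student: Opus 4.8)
The plan is to prove the two inequalities $\dim \cV_Q \le n-1$ and $\dim \cV_Q \ge n-1$ separately. The upper bound will come from generic smoothness of the projection, while the lower bound will be obtained by reduction to the case of plane curves (where it is exactly Lemma \ref{prop:semplice 2 multitangent}) via a general plane section together with an elementary incidence computation.

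For the upper bound I would identify the $\bP^n$ of lines through $Q$ with the target of the projection $\pi_Q$: when $Q \notin X$ this is the finite degree $d$ map $\pi_Q \colon X \to \bP^n$, and when $Q \in X^\sm$ it is the degree $d-1$ map obtained from $Bl_Q X$ as in Remark \ref{rmk:tangentisemplici}. Since we work over $\bC$ and $X$ is reduced and irreducible, this map is generically \'etale, so its branch locus $B \subseteq \bP^n$ is a proper closed subset and $\dim B \le n-1$. Any $\ell \in \cV_Q$ is multi--tangent after removing the contribution at $Q$, hence the relevant fibre of $\pi_Q$ (away from $Q$) is non--reduced and the point of $\bP^n$ it determines lies in $B$; under the identification of lines through $Q$ with $\bP^n$ this realizes $\cV_Q$ as a subset of $B$, giving $\dim \cV_Q \le n-1$.

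For the lower bound I would cut with planes. Let $\cH$ be the family of planes $H \cong \bP^2$ containing $Q$; parametrizing such planes by the lines of the $\bP^n$ of directions at $Q$ gives $\dim \cH = 2(n-1)$. For general $H \in \cH$ the section $X_H := X \cap H$ is an irreducible, reduced plane curve of degree $d$, and by Proposition \ref{prop:monodromygeneralsection} the containment $M(\pi_Q|_{X_H}) \le M(\pi_Q|_X)$ forces $Q \in \cW(X_H)$, since $M(\pi_Q|_X)$ is a proper subgroup of the relevant symmetric group. The base case $n=1$ then applies: by Lemma \ref{prop:semplice 2 multitangent} there are at least two lines through $Q$ lying in $\cV(X_H)_Q$. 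Crucially, for a line $\ell \subset H$ one has $\ell \cap X = \ell \cap X_H$ as subschemes of $\ell$, so contact orders, and hence membership in $\cV$, agree whether computed against $X$ or against $X_H$; thus $\cV(X_H)_Q = \cV_Q \cap \{\ell : \ell \subset H\}$, and every general $H$ contributes at least two lines of $\cV_Q$. I would then run the incidence count on $I := \{(H,\ell) : H \in \cH,\ \ell \subset H,\ \ell \in \cV_Q\}$: the projection $I \to \cH$ dominates a $2(n-1)$--dimensional family with finite non--empty fibres, so $\dim I = 2(n-1)$, while the projection $(H,\ell) \mapsto \ell$ has fibres contained in the $(n-1)$--dimensional family of planes through the fixed line $\ell$. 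Hence $2(n-1) = \dim I \le \dim \cV_Q + (n-1)$, which yields $\dim \cV_Q \ge n-1$ and, combined with the upper bound, equality.

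The step I expect to require the most care is the reduction to the plane--curve case: one must check that Proposition \ref{prop:monodromygeneralsection}, stated for a general plane, can be applied to a general plane constrained to pass through the fixed point $Q$, and that such a section $X_H$ stays irreducible and reduced (a Bertini argument for the linear system of planes through $Q$, using that $Q$ is general in $\cW(X)$). A second delicate point, in the inner case $Q \in X^\sm$, is verifying that the lines produced by Lemma \ref{prop:semplice 2 multitangent} genuinely satisfy the $\cV$--condition after deleting the contribution at $Q$; this is precisely the r\^ole of Remark \ref{rmk:tangentisemplici}, since a line that is simply tangent at $Q$ and simply tangent elsewhere contributes only a transposition and therefore cannot be responsible for the non--uniform monodromy.
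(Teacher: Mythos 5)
Your proof is correct and rests on the same two essential ingredients as the paper's --- the plane--curve statement (Lemma \ref{prop:semplice 2 multitangent}) and the restriction of monodromy to linear sections through $Q$ (the through-a-point strengthening of Proposition \ref{prop:monodromygeneralsection}) --- but it is organized genuinely differently. The paper argues by induction on $n$: the base case $n=1$ is Lemma \ref{prop:semplice 2 multitangent} (noting, for inner points, that at most one of the two multi--tangent lines can be the tangent line at $Q$, so at least one lies in $\cV_Q$), and the inductive step cuts with a general hyperplane $H \ni Q$ and derives a contradiction from $\dim \cV_Q(X \cap H) < n-2$; the upper bound $\dim \cV_Q \le n-1$ is left implicit. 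You instead go directly to plane sections: you prove the upper bound explicitly via the branch locus of $\pi_Q$, and you get the lower bound from an incidence count over the $2(n-1)$--dimensional family of planes through $Q$, using that each general plane contributes a nonempty finite set of lines of $\cV_Q$. Your route has a concrete advantage: the curve--level input is needed only at the specific point $Q \in \cW(X_H)$, and Lemma \ref{prop:semplice 2 multitangent} holds for \emph{every} non--uniform point of a plane curve; the paper's induction, as written, invokes the inductive hypothesis at $Q$ regarded as a point of $\cW(X\cap H)$, whose required genericity is not obviously inherited when $Q$ is fixed before $H$. What the paper's route buys is brevity. Both arguments share the same genuine subtlety, which you correctly flag: Proposition \ref{prop:monodromygeneralsection} must be upgraded to general sections through the fixed point $Q$, together with irreducibility and reducedness of those sections; this follows from a Zariski--Lefschetz argument applied to the projection $\pi_Q$ itself, which is legitimate because a point of $\cW(X)$ is never the vertex of a cone structure on $X$. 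One small correction to your inner--case discussion: the claim that each general $H$ contributes ``at least two'' lines of $\cV_Q$ should be weakened to ``at least one'', and the justification via Remark \ref{rmk:tangentisemplici} is not quite the right one --- the problematic member of the pair produced by Lemma \ref{prop:semplice 2 multitangent} is not a line simply tangent at $Q$ and elsewhere, but the tangent line $T_Q(X_H)$ with all of its tangency concentrated at $Q$ (e.g.\ an inflectional tangent), which is multi--tangent yet excluded from $\cV_Q$; the fix is simply that at most one of the two lines can equal $T_Q(X_H)$, so the other is transverse at $Q$ and belongs to $\cV_Q$, and one line per plane suffices for your incidence count.
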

\begin{proof}
We will proceed by induction on $n$, starting from the case $n=1$. By applying Lemma \ref{prop:semplice 2 multitangent} we know that there must be at least two multi--tangent lines to $X$ and passing by $Q$. If $Q \in \cW(X)^\int$, notice that one of the two lines must be different from $T_Q(X)$, hence the dimension is still $0$.
For the general case $n>2$, assume the result is true for $X$ of dimension $n-1$, and prove it for $X$ of dimension $n$. Let $Q \in \cW(X)$ general, assume by contradiction $\dim \cV_Q(X) < n-1$ and take a general hyperplane $H$ such that $Q \in H$. We will get $\dim \cV_Q(X \cap H) < n-2$, and this is in contradiction with the induction hypothesis.
\end{proof}

Now we need to describe when the family $\cV(X)_\sC$ is filling.

\begin{prop} \label{prop:filling}
Let $X$ be an irreducible, reduced hypersurface of $\ \bP^{n+1}$, $n \geq 2$. Assume $\sC \subset \cW(X)$ is an irreducible curve which is not contained in a linear space of codimension $2$. Then $\cV(X)_\sC$ is filling.
\end{prop}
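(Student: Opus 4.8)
The plan is to show that the family $\cV(X)_\sC$, which has base dimension $n$ by construction (the curve $\sC$ contributes $1$ and, by Lemma \ref{lem:dimension}, each general fibre $\cV_Q$ contributes $n-1$), is filling; since the base already has the correct dimension $n$, it remains only to prove that the map $f$ is dominant onto $\bP^{n+1}$. First I would set up the incidence variety parametrizing pairs $(Q,\ell)$ with $Q\in\sC$ and $\ell\in\cV(X)_Q$, and verify carefully that its dimension is exactly $n$: the curve $\sC$ is $1$-dimensional, and over a general point $Q\in\sC$ the fibre is $\cV_Q$, which has dimension $n-1$ by Lemma \ref{lem:dimension} applied to a general $Q\in\cW(X)$ (here one must check that a general point of the curve $\sC$ is still general enough in $\cW(X)$ for the lemma to apply, or argue that the generic fibre dimension cannot drop below $n-1$). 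This gives $\dim S = n$, as required in the definition of a filling family.

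Next I would prove dominance by contradiction. Suppose $f$ is not dominant; then the image $f(\cV(X)_\sC)$ is a proper subvariety $Z\subsetneq\bP^{n+1}$, so it has dimension at most $n$, meaning the $n$-dimensional total space maps with positive-dimensional fibres, or the image is a hypersurface swept out by these lines. The key structural input is that $\sC$ is \emph{not} contained in a linear space of codimension $2$. I would compare the situation with the join construction of Lemma \ref{lemma:depoi}: the lines of $\cV(X)_\sC$ all meet $\sC$, so they form (a subfamily of) a join of $\sC$ with the union of the sets of multi-tangent directions. If $f$ failed to be dominant, the swept locus $Z$ would be a hypersurface containing $\sC$ and ruled by these lines through $\sC$; the geometry of such a ruled hypersurface, together with $\sC$ spanning more than a codimension-$2$ linear space, should force a contradiction, either directly or by producing a linear subvariety incompatible with the non-degeneracy hypothesis on $\sC$.

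I expect the main obstacle to be exactly this dominance argument, specifically controlling what happens when the lines of $\cV(X)_\sC$ fail to fill up $\bP^{n+1}$. The delicate point is that multi-tangent lines through a fixed point $Q$ need not move in all directions, so one cannot naively count degrees of freedom; instead the non-degeneracy of $\sC$ must be leveraged to guarantee that, as $Q$ varies over $\sC$, the union of the line-directions sweeps out a full-dimensional locus. A clean way to organize this is to assume the image is contained in a hypersurface $Z$ and then study how $\sC\subset Z$ interacts with the ruling: if every line of the family lay in $Z$, then either $Z$ is a cone or $\sC$ is forced into a small linear space, contradicting either irreducibility/non-cone hypotheses on $X$ or the assumption on $\sC$. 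I would also keep in mind the possibility that the lines through different points of $\sC$ all lie in a common hyperplane or lower-dimensional linear space, which is precisely the configuration ruled out by $\sC\not\subset\bP^{n-1}$; reducing the non-dominance to such a degenerate linear configuration and then invoking the hypothesis on $\sC$ is, I anticipate, the crux of the proof.
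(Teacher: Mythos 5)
Your dimension count and the reduction of the problem to dominance match the paper's first step exactly (both rest on Lemma \ref{lem:dimension}), but the dominance argument --- which you yourself identify as the crux --- contains a genuine gap. You assert that the degenerate configuration to be excluded, namely all lines of $\cV(X)_\sC$ lying in a common linear space, is ``precisely the configuration ruled out by $\sC\not\subset\bP^{n-1}$''. It is not. The hypothesis only forbids $\sC$ from lying in a linear space of codimension $2$; the curve $\sC$ may perfectly well span a \emph{hyperplane} $\bP^n$, and nothing in the non-degeneracy assumption prevents every multi-tangent line through every point of $\sC$ from lying in that hyperplane. This residual case cannot be killed by linear algebra on $\sC$ at all: the paper disposes of it only by using that the points of $\sC$ are non-uniform. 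Concretely, one cuts with a general plane $K\ni Q$ and invokes Proposition \ref{prop:monodromygeneralsection} together with Lemma \ref{prop:semplice 2 multitangent}: through a non-uniform point of the plane curve $X\cap K$ there must pass at least two multi-tangent lines, whereas a general plane through $Q$ meets the hyperplane in a single line --- a contradiction. For inner non-uniform points this needs the further care of Remark \ref{rmk:tangentisemplici} (for $K$ general the tangent line at $Q$ is only simply tangent there, so it cannot account for either of the two required lines), a subtlety entirely absent from your sketch.

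The mechanism by which you hope to reach a linear configuration is also not pinned down, and partly aims at a hypothesis that is not available: you suggest the contradiction might come from ``irreducibility/non-cone hypotheses on $X$'', but this proposition does not assume $X$ is not a cone. The paper's actual mechanism is different: if the image lies in a finite union of hypersurfaces $V_1,\dots,V_r$, then for general $Q\in\sC$ the $n$-dimensional locus swept by $\cV_Q(X)$ is a union of cones with vertex $Q$ filling up the $V_j$'s; since this holds as $Q$ moves along $\sC$, each $V_j$ is a cone whose vertex contains all of $\sC$, hence (by {\AA}dlandsvik's result on vertices, cited in the paper) contains the span $\langle\sC\rangle$, which by hypothesis has codimension at most $1$. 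A hypersurface whose vertex contains a $\bP^n$ must itself be that hyperplane, forcing $r=1$ and $V_1=\langle\sC\rangle$ --- at which point one lands exactly in the hyperplane case above, to be finished by the monodromy argument. Without the cone-vertex step and without the monodromy input, your outline does not close.
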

\begin{proof}
Thanks to Lemma \ref{lem:dimension}, we know that $\cV_\sC$ has dimension $n$. It remains to prove that $\cV_\sC$ is dominant. Assume by contradiction that this is not the case, so that $\cV_\sC$ is contained in a finite union of hypersurfaces $V_j \subset \bP^{n+1}$, $j=1, \dots, r$. If we consider $Q \in \sC$ general, $\cV_Q(X)$ is the union of the cones over $X \cap V_j$ with vertex $Q$, each of which correspond to $V_j$. 
As a consequence we vary the vertex $Q \in \sC$, $\cV_Q(X)$ stays the same, so each component $V_j$ is constant. Hence, the whole $\sC$ is contained in the vertex of each $V_j$, and by \cite[Proposition 1.3 (i)]{Adlandsvik}, also the linear span $\langle \sC \rangle$ must be contained in the vertex of each $V_j$. Since we are assuming that the codimension of $\langle \sC \rangle$ is strictly greater than $2$, the only possibility is that each $V_j$ is a linear space. Moreover, since $\sC$ is contained in each $V_j$, there must be only one of them, so $r=1$, and $\sC$ is a curve in a $\bP^n$.

Assume first that $n>2$, take a general point $Q \in \sC$ and a general plane $K \cong \bP^2$ such that $Q \in K$. By the genericity of the choice of $K$, we know that the family $\cV_Q(X \cap K)$ is composed by just the line $V_1 \cap K$. If $Q \in \cW(X)^\out$, then we just notice that by Lemma \ref{prop:semplice 2 multitangent} there must be at least two lines on $\cV_Q(X \cap K)$, and this gives a contradiction. If $Q \in \cW(X)^\int$, we have to be more careful due to the way we defined $\cV_Q(X)$. Since $K$ is general, we know that the tangent line $C_Q(X \cap K)$ is simply tangent at $Q$, hence it cannot be one of the two lines described in \ref{prop:semplice 2 multitangent}, as discussed in Remark \ref{rmk:tangentisemplici}. As a consequence, we get again that there are at least two lines on $\cV_Q(X \cap K)$, and this gives a contradiction.

We are left with the case $n=2$. In this case, we know that the tangent line $C_Q(X \cap K)$ is simply tangent at $Q$ because we chose $Q$ general in $\cC$.
\end{proof}

\begin{rmk} \label{rem:cases}
Consider a curve $\sC \subset \cW(X)$ and a general line $\ell \in \cV(X)_\sC$. Since the lines of this family are multi tangent at $X$, we have one of the following behaviours
\begin{enumerate}
    \item[(C1)] The line $\ell$ if is bitangent or asymptotic tangent to $X^\sm$;
    \item[(C2)] The line $\ell$ passes through a point of $X^\sing$ and is also tangent at a (necessarily different) point of $X^\sm$;
    \item[(C3)] The line $\ell$ intersects $X^\sing$ in more than one point;
    \item[(C4)] The line $\ell$ is in the tangent cone to X at a point in $X^\sing$.
\end{enumerate}
Moreover, from the construction of $\cV(X)$ it is clear that all the tangency points are not in $\sC$.
\end{rmk}

We now prove the generalisation of \cite[Lemma 4.5]{ccm}. While the proof follows the same lines, we report it here for the reader convenience to explicitly show that it works for the case of inner non-uniform points. 

\begin{lem} \label{lem:case123}
Let $X$ be an irreducible, reduced hypersurface of $\bP^{n+1}$, $n \geq 2$. Consider the family $\cV_\sC$ associated with curve $\sC \subset \cW(X)$. If $\sV_\sC$ has a maximal component of lines of type $(C1)$, $(C2)$ or $(C3)$, then $\sC$ is contained in a linear space of codimension $2$ in $\bP^{n+1}$. 
\end{lem}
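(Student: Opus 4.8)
The plan is to exploit the focal-locus machinery: since $\sC\subset\cW(X)$ is a curve not contained in a codimension-$2$ linear space, Proposition \ref{prop:filling} guarantees that $\cV_\sC$ is a filling family, so Lemma \ref{lem: degree focal scheme} applies and the focal locus on a general line $\ell\in\cV_\sC$ consists of exactly $n$ points counted with multiplicity. The strategy is to show that in each of the cases (C1), (C2), (C3) the family $\cV_\sC$ is actually the join of $\sC$ with a fixed codimension-$2$ subvariety $F$, and then invoke Lemma \ref{lemma:depoi} to force $F$ to be linear, whence $\sC\subseteq F$ lies in a codimension-$2$ linear space — contradicting the hypothesis, unless $\sC$ is already contained in such a space, which is the desired conclusion.

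First I would account for the foci forced by the geometry of $\sC$ itself and by the tangency points. The vertex point $Q=\ell\cap\sC$ is a fundamental point: the subfamily of lines of $\cV_\sC$ through $Q$ has dimension $n-1$ by Lemma \ref{lem:dimension}, so by Lemma \ref{lem:fundpoint} the point $Q$ is a focus of multiplicity $n-1$ on $\ell$. This already uses $n-1$ of the $n$ available foci. The remaining single focus must be located at the tangency/singular points dictated by the case. In case (C1) the line $\ell$ is (bi- or asymptotically) tangent to $X^\sm$, so by Lemma \ref{lem:focalnewtuttoinsieme} each tangency point of contact order $\ge 1$ contributes a focus (order $\ge 2$ contributing multiplicity $\ge 2$); in cases (C2) and (C3) the intersection with $X^\sing$ likewise pins down focal points on $\ell$ away from $Q$. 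The key numerical point is that, after subtracting the $n-1$ foci at $Q$, there is room for exactly one more focus, so all the tangency behaviour must concentrate the remaining focal mass at a single point $R=R(\ell)\in\ell$ lying off $\sC$.

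Next I would use this to identify the fixed subvariety $F$. As $\ell$ varies in $\cV_\sC$, the locus $F$ swept out by these residual foci $R(\ell)$ is a subvariety; the tangency points lie on $X$ and, because the count forces them to a single residual focus per line, one shows $F$ has codimension $2$ in $\bP^{n+1}$ (it is cut out by the tangency conditions, one dimension each from being on $X$ and from the focal/tangency constraint). The family $\cV_\sC$ is then precisely the family of lines joining $\sC$ and $F$: each line meets $\sC$ at its vertex and $F$ at its residual focus. Provided $\sC$ is not contained in a $\bP^{n-1}$ and $\sC\nsubseteq F$ — hypotheses I would verify hold in the contradiction setup, since otherwise $\sC$ is already in a codimension-$2$ linear space and we are done — Lemma \ref{lemma:depoi} forces $F$ to be linear. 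Since every line of $\cV_\sC$ joins a point of $\sC$ to a point of $F$ and $\sC$ must meet the linear $F$ appropriately, one concludes $\sC\subset\langle F\rangle$, a linear space of codimension $2$.

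The main obstacle I anticipate is the bookkeeping of focal multiplicities in the three cases simultaneously, and in particular the construction of $F$ with the correct codimension. The delicate point is that $\cV_\sC$ may have several maximal components, so one must restrict to a single maximal component of type (C1), (C2), or (C3) (as the statement does) and argue that \emph{that} component alone is a join of $\sC$ with a codimension-$2$ $F$; controlling that the residual focus is genuinely a single moving point sweeping a codimension-$2$ locus, rather than collapsing or overcounting, is where the care lies. Cases (C2) and (C3), which involve $X^\sing$, require checking that the singular points behave as honest foci under Lemma \ref{lem:fundpoint} and Lemma \ref{lem:focalnewtuttoinsieme}; handling the inner case $Q\in\cW(X)^\int$ also demands, as in Proposition \ref{prop:filling}, that the tangent line $T_Q(X)$ be excluded from contributing, which is guaranteed by the definition of $\cV_Q(X)$ that removes the contribution at $P$.
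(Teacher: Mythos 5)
Your argument goes wrong at the focal count, and this is a genuine gap rather than a stylistic difference. After correctly establishing that $\cV_\sC$ is filling (Proposition \ref{prop:filling}), that the total focal degree on a general line $\ell$ is exactly $n$ (Lemma \ref{lem: degree focal scheme}), and that the vertex $Q=\ell\cap\sC$ is a focus of multiplicity at least $n-1$ (Lemma \ref{lem:dimension} plus Lemma \ref{lem:fundpoint}), you conclude that there is ``room for exactly one more focus'' and that the tangency behaviour must therefore concentrate at a single residual point $R(\ell)$. But the definition of $\cV(X)_P$ says precisely that $\ell$ is multi--tangent to $X$ \emph{away} from $Q$, and in each of the cases (C1), (C2), (C3) this forces residual focal multiplicity at least $2$: a bitangent line has two distinct tangency points on $X^\sm$, each a focus of multiplicity at least $1$ (Lemma \ref{lem:focalnewtuttoinsieme}); an asymptotic tangent gives a single focus of multiplicity at least $2$; in (C2) the smooth tangency point contributes at least $1$ and the singular point is a fundamental point (dimension count plus Lemma \ref{lem:fundpoint}) contributing at least $1$; in (C3) both singular points contribute at least $1$. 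Hence the focal degree is at least $(n-1)+2=n+1>n$, which already contradicts Lemma \ref{lem: degree focal scheme}. That immediate numerical contradiction is the paper's entire proof; by assuming the residual focal mass can fit into one unit, you miss exactly the point on which the lemma turns.

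As a consequence, the second half of your plan is both unnecessary and unsound. The codimension-$2$ locus $F$ swept by the residual foci is never actually constructed (the claim that the tangency conditions cut out codimension $2$ is unsupported), and even granting its existence, Lemma \ref{lemma:depoi} only concludes that $F$ is \emph{linear}: it does not give $\sC\subseteq\langle F\rangle$. Lines joining $\sC$ to a linear space $F$ of codimension $2$ in no way force $\sC$ into $F$ or its span --- indeed the hypotheses of that lemma require $\sC\nsubseteq F$ --- so your final step does not yield the conclusion of the statement. The join argument via Lemma \ref{lemma:depoi} is the one the paper deploys for case (C4) in the proof of Theorem \ref{thm:main}, where the lines join $\sC$ and $X^\sing$ and the contradiction is then extracted from the tangent-cone Lemma \ref{lem:localcomputationtangentcone}; it is not the mechanism for cases (C1)--(C3).
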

\begin{proof}
Assume by contradiction $\sC$ is not contained in a linear space of codimension $2$ in $\bP^{n+1}$. Then $\cV_\sC$ is filling by Proposition \ref{prop:filling}. Consider a general $\ell$ in $\cV_\sC$. By construction, there is a point $Q \in \ell \cap \sC$. By Lemma \ref{lem:dimension} the family $\cV_Q$ has dimension $n-1$, so by Lemma \ref {lem:fundpoint} $Q$ is a focal point on $\ell$ of multiplicity at least $n-1$.
We know that $\ell$ is multi--tangent at $X$ in points outside $Q$. This comes from the definition of $\cV(X)$, so depending on the case in analysis, we can have the following:
\begin{enumerate}
    \item[(C1)] Apply Lemma \ref{lem:focalnewtuttoinsieme}. If the line $\ell$ is bitangent to $X^\sm$ at two different points, they are foci of multiplicity at least $1$. If the line $\ell$ is asymptotic tangent to $X^\sm$, it is a focus of multiplicity at least $2$;
    \item[(C2)] By Lemma \ref{lem:focalnewtuttoinsieme}, the tangent point of $X^\sm$ is a focus of multiplicity at least $1$.  By a dimension count, there is a subfamily of $\cV_\sC$ of dimension at least $1$ of lines passing through the point of $X^\sing$. By Lemma \ref{lem:fundpoint}, it is a focus of multiplicity at least $1$.
    \item[(C3)] By a dimension count and by Lemma \ref{lem:fundpoint}, both singular points are foci of multiplicity at least $1$.
\end{enumerate}

In each case, the focal locus on $\ell$ contains the point $Q$ with multiplicity at least $n-1$, and other points which contribute with multiplicity at least $2$. So the degree of the focal scheme on $\ell$ is at least $n+1$, but this contradicts Lemma \ref{lem: degree focal scheme}.
\end{proof}

We are now ready to prove the main result of this paper.

\begin{proof}[Proof of Theorem \ref{thm:main}]
The case $n=2$ is an easy consequence of the results of \cite{ps}. Let now $n>2$ and assume that a component of $\cW(X)$ is not contained in a linear spaces of codimension $2$ in $\bP^{n+1}$. Consider a curve $\sC \subset \cW(X)$ with the same property. Thanks to Proposition \ref{prop:filling} the family ${\cV_\sC}$ is filling. Let $Q$ be a general point in $\sC$ and $\ell$ be the general line in ${\cV_Q}$. Since the dimension of ${\cV_Q}$ is $n-1$, the point $Q$ is a focus of multiplicity $n-1$ on $\ell$ by Lemma \ref{lem:fundpoint}. As stated in the Remark \ref{rem:cases}, we know that $\ell$ is multi tangent to $X$ in points different from $Q$, which also are part of the focal scheme induced by ${\cV_\sC}$ on $\ell$. If we are in the cases (C1), (C2), (C3) of Remark \ref{rem:cases}, we can apply Lemma \ref{lem:case123} and get a contradiction.

It remains to handle the case (C4), namely $\ell$ belongs to the tangent cone of a point in $X^\sing$. The family $\widetilde{\cV_\sC}$ is composed by lines joining $\sC$ and $X^\sing$. By Lemma \ref{lemma:depoi} we get that $X^\sing$ is the union of a finite number of linear components each isomorphic to $\bP^{n-1}$. Moreover, we are assuming the general line belongs to the tangent cone of a point in $X^\sing$, hence, in order to have a non-uniform monodromy, the singular locus must split in at least two linear spaces (Proposition \ref{prop:generatedtranspositions}). Let $X_1$ be one of the components of $X^\sing$, and take a general hyperplane $H \cong \bP^n$ passing through $X_1$. Due to the fact that $\widetilde{\cV_\sC}$ is filling, we can choose $X_1$ such that $\sC$ is not contained in a hyperplane passing through $X_1$, hence, on $H$ there is at least one point $Q \in \sC$ not contained in $X_1$. If we take $x \in X_1$ general, the line $\langle x, Q \rangle$ belongs to the tangent cone $C_x(X)$. By Lemma \ref{lem:localcomputationtangentcone}, there must be a component of the intersection $X \cap H$ passing through $x$ which is different from $X_1$, and this is a contradiction.
\end{proof}

We can now prove some results that are consequences of the main Theorem. Theorem \ref{thm:projectionsmooth} shows that Conjecture \ref{conj:coni} holds with the additional hypothesis of $X$ being smooth or a general projection of a smooth variety. We report the Bertini theorem as in \cite[Theorem 3.3.1]{Laz1} formulated in the context we will need for the proof of Theorem \ref{thm:projectionsmooth}.
\begin{thm} \label{thm:bertinilaz}
Let $\tilde{X} \subset \bP^{2n+1}$ a variety of dimension $n$. Let $\Lambda$ be a linear space of dimension $2n-d$ such that $\tilde{X} \cap \Lambda$ is smooth at every point. Then the section $\tilde{X} \cap H$, for a general $H$ containing $\Lambda$ is irreducible.
\end{thm}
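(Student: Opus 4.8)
The plan is to translate the family of hyperplanes through $\Lambda$ into a linear system on $\tilde{X}$ and then reduce to the classical Bertini irreducibility theorem. The hyperplanes of $\bP^{2n+1}$ containing the fixed linear space $\Lambda$, of dimension $2n-d$, form a linear system of projective dimension $d$; restricting it to $\tilde{X}$ produces a sub-linear-system $\cL \subseteq |\cO_{\tilde{X}}(1)|$ whose associated rational map is the projection from $\Lambda$, which I denote $\phi : \tilde{X} \to \bP^{d}$ (defined away from $\Lambda$). Under this dictionary the sections $\tilde{X} \cap H$, for $H \supseteq \Lambda$, are precisely the members of $\cL$, that is the closures of the preimages $\phi^{-1}(L)$ of hyperplanes $L \subset \bP^{d}$. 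The statement is therefore equivalent to the assertion that a general member of $\cL$ is irreducible.

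First I would form the incidence variety $Z = \{(x,H) \in \tilde{X} \times \bP^{d} : x \in H\}$ together with its projections $p : Z \to \tilde{X}$ and $q : Z \to \bP^{d}$, so that the fibre of $q$ over $H$ is exactly $\tilde{X} \cap H$. A fibre count for $p$ — over a point $x \notin \Lambda$ the fibre is the $\bP^{d-1}$ of hyperplanes through $\langle \Lambda, x\rangle$, while over $x \in \tilde{X} \cap \Lambda$ it is all of $\bP^{d}$ — combined with the irreducibility of $\tilde{X}$ shows that $Z$ is irreducible of dimension $n+d-1$, the locus lying over $\tilde{X}\cap\Lambda$ having too small a dimension to be a separate component. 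Hence $q$ is dominant with general fibre of dimension $n-1$, and irreducibility of the general fibre reduces, as usual, to its connectedness together with generic reducedness. For connectedness I would use the theme already exploited in this paper: since $Z$ (equivalently $\tilde{X}$) is irreducible, the monodromy of $\phi$ over its image is transitive, and provided $\dim \phi(\tilde{X}) \geq 2$ a Lefschetz-type argument shows this transitivity is inherited over a general hyperplane section of the image, forcing $\phi^{-1}(L)$ to be connected.

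The hypothesis that $\tilde{X} \cap \Lambda$ be smooth enters in two ways, and isolates the main obstacle. On the one hand, blowing up $\tilde{X}$ along the smooth centre $\tilde{X} \cap \Lambda$ resolves the indeterminacy of $\phi$: it yields an irreducible variety $\tilde{X}'$, birational to $\tilde{X}$, on which the strict transform of $\cL$ is base-point free and defines a genuine morphism $\psi : \tilde{X}' \to \bP^{d}$, and the base-locus version of Bertini's theorem then guarantees that the general member is reduced (indeed smooth along $\tilde{X}\cap\Lambda$). On the other hand, smoothness is exactly what must rule out the remaining degenerate possibility, namely that $\cL$ be composed with a pencil, i.e.\ that $\dim \phi(\tilde{X}) \leq 1$; this is the step I expect to be the crux. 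Here I would argue that if $\phi$ collapsed $\tilde{X}$ onto a curve, then $\tilde{X}$ would be swept out by a one-parameter family of linear sections through $\Lambda$, each of dimension $n-1$ and all meeting $\Lambda$ along the common locus $\tilde{X}\cap\Lambda$; tracking this family forces $\tilde{X}\cap\Lambda$ either to acquire singular points or to exceed its expected dimension, contradicting the hypothesis. Once $\dim\phi(\tilde{X}) \geq 2$ is secured, the connectedness and reducedness established above combine to show that $\psi^{-1}(L)$, hence $\tilde{X}\cap H$, is irreducible for general $H \supseteq \Lambda$, as desired.
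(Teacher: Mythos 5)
A preliminary remark: the paper does not actually prove this statement --- it is quoted from \cite[Theorem 3.3.1]{Laz1}, ``formulated in the context we will need'' --- so your attempt has to stand on its own, and it fails at exactly the step you yourself flag as the crux. You claim that smoothness of $\tilde{X}\cap\Lambda$ excludes the pencil case $\dim\phi(\tilde{X})\leq 1$, because a collapse onto a curve would force $\tilde{X}\cap\Lambda$ ``either to acquire singular points or to exceed its expected dimension, contradicting the hypothesis.'' Exceeding the expected dimension contradicts nothing: the hypothesis is smoothness alone, with no condition on $\dim(\tilde{X}\cap\Lambda)$. And the pencil case is genuinely compatible with smoothness: take $n=d=2$, let $\tilde{X}\subset\bP^5$ be the rational normal scroll $S(2,2)$, i.e.\ $\bP^1\times\bP^1$ embedded by $([s:t],[u:v])\mapsto[us^2:ust:ut^2:vs^2:vst:vt^2]$, and let $\Lambda=\{x_0=x_1=x_2=0\}$ be the plane of the directrix conic. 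The scroll is cut out by the $2\times 2$ minors of $\left(\begin{smallmatrix} x_0 & x_1 & x_3 & x_4\\ x_1 & x_2 & x_4 & x_5\end{smallmatrix}\right)$, so $\tilde{X}\cap\Lambda$ is scheme-theoretically the smooth conic $\{x_4^2=x_3x_5\}\subset\Lambda$; yet $\phi=[x_0:x_1:x_2]$ maps $\tilde{X}$ onto a conic in $\bP^2$, and a general hyperplane $H\supset\Lambda$ cuts $\tilde{X}$ in the directrix conic plus two ruling lines --- reducible (your incidence variety $Z$ is also reducible here, since the locus over $\tilde{X}\cap\Lambda$ has full dimension $n+d-1$). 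So under your reading of $H$ as a hyperplane the statement is outright false, and no argument can close this gap. What makes the theorem usable in the paper is the dimensional situation of its one application: there $\tilde{X}\cap\Lambda$ has the expected dimension $n-d-1$, the projection is dominant with positive-dimensional fibres, $\tilde X$ is smooth, and moreover $H$ there has dimension $\dim\Lambda+1$ (it is $\langle L,H\rangle\supset\langle L,K\rangle$), not $2n$, so your translation into preimages of hyperplanes of $\bP^d$ does not even match the intended use.

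There is a second, independent gap: the concluding inference ``connectedness and reducedness \ldots combine to show irreducibility'' is invalid. A dominant morphism from an irreducible variety can have general fibre connected, reduced, and still reducible: $Z=V(x^2-ty^2)\subset\bA^3\to\bA^1_t$ is irreducible, and its fibre over every $t\neq 0$ is two distinct lines meeting at the origin. To upgrade connectedness to irreducibility you need the general member to be normal (for instance smooth), which your blow-up argument could only deliver if $\tilde{X}$ itself were smooth --- true in the application (Theorem \ref{thm:projectionsmooth}) but not assumed in the statement you set out to prove. In summary, your skeleton (resolve the base locus, exclude composition with a pencil, apply a Bertini-type theorem) is indeed how one would deduce such a statement from \cite[Theorem 3.3.1]{Laz1}, but both load-bearing steps are unproved, and the first one cannot be proved from the stated hypotheses, because the statement as written is missing the dimensional assumptions under which the paper actually invokes it.
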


The proof of Theorem \ref{thm:projectionsmooth} relies on a famous result of \cite{Mather}, which is very useful to study the singularities arising from projection maps. The following is an algebraic version which is written here restricting it to the case we will need. The complete version is \cite[Theorem 1, Theorem 2]{abo}.

\begin{thm}\cite[Theorem 1]{abo} \label{thm:Mather}
Let $\tilde{X} \subset \bP^{2n+1}$ be a smooth variety of dimension $n$. 
Let $L$ be a linear subspace of dimension $t = n-1$ such that $L \cap X= \emptyset$. 
Let $X$ be the image of $\tilde{X}$ under the linear projection $\pi_L\colon \bP^{2n+1} \to \bP^{n+1}$. 
For any $i \leq n$, define $\tilde X_{i}:=\{x \in \tilde X\ |\ \dim(T_x \tilde X \cap L)=i-1\}$. 
For $L$ general, every $\tilde X_{i}$ is smooth and, when not empty, its codimension in $\tilde X$ is $i \cdot (i + 1)$.
\end{thm}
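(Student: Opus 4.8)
The plan is to realise each stratum $\tilde{X}_i$ as a fibre of a projection from a global incidence variety lying over the Grassmannian, and then to extract both the smoothness and the codimension from a single determinantal (Schubert) computation together with generic smoothness in characteristic zero. Write $G := \bG(n-1,\bP^{2n+1})$ for the variety of the $(n-1)$-planes $L$, and consider
\[
J_i := \{(x,L) \in \tilde{X} \times G \ :\ \dim(T_x\tilde{X} \cap L) = i-1\}.
\]
For a fixed $x$, the embedded tangent space $T_x\tilde{X}$ is a fixed $\bP^n$, and the locus of $(n-1)$-planes meeting it in dimension exactly $i-1$ is the open Schubert cell inside the Schubert variety $\{L : \dim(T_x\tilde X \cap L) \ge i-1\}$. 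Passing to the affine cones $\hat{T}$ (dimension $n+1$) and $\hat{L}$ (dimension $n$) in $\bC^{2n+2}$, the condition $\dim(\hat T \cap \hat L) \ge i$ is the degeneracy condition that the composite $\hat L \hookrightarrow \bC^{2n+2} \twoheadrightarrow \bC^{2n+2}/\hat T$ drops rank by at least $i$; since this is a map of bundles of ranks $n$ and $n+1$, the Giambelli--Thom--Porteous formula gives the expected codimension $(n-(n-i))(n+1-(n-i)) = i \cdot (i+1)$ in $G$, matching the claimed value.

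First I would make this relative over $\tilde{X}$. Because $\tilde{X}$ is smooth, the Gauss map $x \mapsto T_x\tilde{X}$ is a morphism and the affine tangent spaces fit into a rank-$(n+1)$ subbundle $\hat{\sT}$ of the trivial bundle with fibre $\bC^{2n+2}$ on $\tilde{X}$. Pulling this back to $\tilde{X}\times G$ together with the rank-$n$ tautological subbundle $\hat{\cL}$ from $G$, the set $J_i$ is precisely the exact-rank stratum of the bundle map $\hat{\cL} \to \bC^{2n+2}/\hat{\sT}$. The key structural input is that the exact-rank stratum of a morphism of vector bundles is smooth: in the universal determinantal situation the singularities of the rank-$\le r$ locus lie on the deeper rank-$\le r-1$ stratum, so the exact-rank part is smooth of the expected codimension. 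Fibring $J_i$ over $\tilde{X}$ exhibits it as a bundle whose fibres are the smooth open Schubert cells of constant codimension $i(i+1)$ in $G$; since the base $\tilde{X}$ is smooth of dimension $n$, I conclude that $J_i$ is smooth of dimension $n + \dim G - i(i+1)$.

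Next I would project. Let $q\colon J_i \to G$ be the second projection; by construction $q^{-1}(L) = \tilde{X}_i$ for each $L$, so no higher stratum contaminates the fibre. Working over $\bC$, generic smoothness (Sard's lemma, in the form of Kleiman's transversality theorem) applied to the morphism $q$ from the smooth variety $J_i$ yields a dense open $U \subset G$ over which $q$ is smooth. Hence for $L \in U$ the fibre $\tilde{X}_i = q^{-1}(L)$ is smooth, and, when $q$ is dominant, of dimension $\dim J_i - \dim G = n - i(i+1)$, i.e. of codimension $i(i+1)$ in $\tilde{X}$; if $q$ is not dominant then its general fibre is empty, which is exactly the case excluded by the phrase ``when not empty.'' Finally, a general $L$ of dimension $n-1$ automatically satisfies $L \cap \tilde{X} = \emptyset$ by the dimension count $n + (n-1) - (2n+1) < 0$, so the genericity invoked here is compatible with the standing hypothesis on $L$.

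The main obstacle I anticipate is the smoothness of $J_i$: one must verify not merely that $J_i$ has the expected codimension, but that it is genuinely smooth, which requires the relative Schubert/degeneracy construction over $\tilde{X}$ to be locally trivial and the exact-rank stratum to be cut out transversally in the fibre direction. This is precisely where the smoothness of $\tilde{X}$, and hence the local triviality of the Gauss family of tangent spaces, is indispensable. Once $J_i$ is known to be smooth, the passage to a general fibre via generic smoothness is routine in characteristic zero, and delivers simultaneously the smoothness of $\tilde{X}_i$ and its codimension $i(i+1)$.
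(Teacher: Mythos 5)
This theorem is not proved in the paper at all: it is quoted verbatim from \cite[Theorem 1]{abo} as an external input (an algebraic version of Mather's genericity results for linear projections), so your proposal cannot be checked against an in-paper argument, only against the standard proof in the literature --- which it essentially reconstructs, correctly, for the statement as quoted. The incidence variety $J_i \subset \tilde X \times \bG(n-1,\bP^{2n+1})$, the Thom--Porteous count $(n-(n-i))(n+1-(n-i)) = i(i+1)$ for the Schubert condition on the cones $\hat L$, $\hat T$, the fibration of $J_i$ over $\tilde X$ with fibres the exact-intersection Schubert strata, and the final application of generic smoothness to $q \colon J_i \to G$ (empty general fibre when $q$ is not dominant accounting for the ``when not empty'' clause) are exactly the right ingredients, and the dimension bookkeeping checks out. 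Two routine points you leave implicit should be said: one must intersect the finitely many dense opens $U_i \subset G$, $i \leq n$, to get a single $L$ good for all strata simultaneously, and the identification $q^{-1}(L) = \tilde X_i$ is needed scheme-theoretically for the smoothness of the fibre to mean smoothness of $\tilde X_i$.

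One local blemish deserves correction: your ``key structural input,'' stated as \emph{the exact-rank stratum of a morphism of vector bundles is smooth}, is false in that generality --- for a map $\cO \to \cO$ given by a function $f$ with singular zero divisor, the exact-rank-zero stratum is $\{f=0\}$, which is singular. What is true, and what your argument actually uses, is fibrewise homogeneity: over a fixed $x$, the locus $\{L : \dim(L \cap T_x\tilde X) = i-1\}$ is a single orbit of the parabolic stabilizing $T_x\tilde X$ acting on $G$, hence smooth of codimension $i(i+1)$; Zariski-local trivialization of the subbundle $\hat\sT \subset \cO^{2n+2}$ (available since $\tilde X$ is smooth, by extending a local frame of $\hat\sT$ to a frame of the trivial bundle) then makes $J_i \to \tilde X$ locally a product, giving smoothness of $J_i$. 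You do invoke this local triviality, so the proof stands once the justification is routed through the orbit description rather than a general determinantal principle; alternatively, Kleiman's transversality theorem applied to the Gauss map $\gamma \colon \tilde X \to \bG(n,\bP^{2n+1})$ and the $\mathrm{PGL}_{2n+2}$-translates of the fixed Schubert stratum yields the same conclusion in one stroke. Be aware, finally, that the full theorem in \cite{abo} concerns deeper strata of generic projections; your elementary argument suffices precisely because the statement quoted here involves only the first-order data $\dim(T_x\tilde X \cap L)$.
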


We are now ready to prove Theorem \ref{thm:projectionsmooth}.

\begin{proof}[Proof of Theorem \ref{thm:projectionsmooth}]
Fix $n = \dim X$, and assume that $X \subset \bP^{n+1}$ is a general projection of a smooth variety $\tilde X$. By \cite[Section 5]{FulLaz}, we can assume $\tilde X \subset \bP^{2n+1}$.
Assume by contradiction that $\cW(X)$ is not finite, and denote by $K\cong \bP^k$ the smallest linear subspace of $\bP^{n+1}$ containing an irreducible component of $\cW(X)$. We remark that by Theorem \ref{thm:main}, we have $1 \leq k \leq n-1$. Consider a general $H\cong\bP^{k+1}$ containing $K$. 

Assume first $X \cap H$ is irreducible. By hypothesis $X$ is reduced, and the base locus of the linear system $X \cap H$ is $X \cap K$. Notice that $\dim(X \cap K) = \dim(X \cap H) -1$, unless $K \subset X$, and so $X \cap H$ is reducible.
As a consequence, we get that $X \cap H$ is reduced, see for instance \cite[Proposition 4.6.1]{MR199181}. Now apply Proposition \ref{prop:filling} to a curve $\sC \subset \cW(X)^\out$ which spans $K$. Since $\sC$ spans a subspace of codimension $1$ in $H$, we get that the family of lines in $\cV_{\sC}$ which belongs to $H$ is filling. Since $X \cap H$ is reduced and irreducible, hypothesis of Theorem \ref{thm:main} are satisfied and we immediately get a contradiction. 

Now assume $X \cap H$ is reducible. Denote by $L$ the linear space in $\bP^{2n+1}$ which gives the projection $\pi_L:\tilde{X} \to X$. Let also $\tilde{H}:=\langle L, H \rangle$, and $\tilde{K}:=\langle L,K\rangle$. Notice that $\tilde{K} \cong \bP^{k+n}$ is a suitable $\Lambda$ for applying \ref{thm:bertinilaz}. Since we are assuming $X \cap H$ is reducible, we get that $\tilde{X} \cap \tilde{K}$ must be not reduced. This means that $T_x{\tilde X} \subset \tilde K$ for $x \in \tilde{X} \cap \tilde{K}$. 
As a consequence, the dimension of $T_x \tilde X \cap L$ in $\tilde K$ is $n-k-1$. From the fact that $1 \leq k \leq n-1$, we have $n-k-1\geq 0$. 
This shows that the points of $\tilde X \cap \tilde K$ are contained in $\tilde X_{n-k}$, defined in Theorem \ref{thm:Mather}.
Now, $\tilde X \cap \tilde K$ has dimension at least $k-1$ and $\tilde X_{n-k}$ has dimension $n-(n-k)(n-k+1)$. We get
\begin{align*}
    k-1 & \leq n-(n-k)(n-k+1)\\
    0 &\leq 1-(n-k)^2
\end{align*}
It follows that $k=n-1$, and so $\dim (\tilde X \cap \tilde K) = \dim \tilde X_1$. Since $\tilde X_1$ is smooth by Theorem \ref{thm:Mather}, this gives a contradiction.
\end{proof}

Theorem \ref{thm:transpositions} is a consequence of the following result about transpositions in the monodromy group associated with non--uniform points. The idea is to try to study the situation arising from possible counterexamples of Conjecture \ref{conj:coni}.
 
\begin{prop}\label{prop:trasposizioni}
Let $X$ be an irreducible, reduced hypersurface of $\bP^{n+1}$. Assume $\cW(X)$ is not finite, and $X$ is not a cone, then the monodromy group associated with all but finitely many points of $\cW(X)$ contains transpositions.
\end{prop}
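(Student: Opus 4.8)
The plan is to read transpositions off the tangency behaviour of lines through $P$, and then to forbid their absence along a curve by a focal-locus count. By Proposition \ref{prop:cyclestructure}, together with Remark \ref{rmk:tangentisemplici} for inner points, the local permutation attached to a line $\ell\ni P$ is a transposition exactly when, after deleting the contribution of $P$, the line meets $X$ in one point of multiplicity $2$ and transversally elsewhere. Thus $M(\pi_P)$ contains a transposition precisely when some tangent line through $P$ fails to be multi-tangent away from $P$, i.e. does not lie in $\cV(X)_P$. Writing $\cT_P$ for the family of all lines through $P$ tangent to $X$ away from $P$, the points carrying no transposition are exactly those with $\cV(X)_P=\cT_P$; call this locus $Z\subseteq\cW(X)$. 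Since $Z$ is constructible, if it were infinite it would contain an irreducible curve $\sC$, which by Theorem \ref{thm:main} lies in a linear space $\Lambda$ of codimension $2$. I would then take $P\in\sC$ general, so that $\cV_\sC=\cT_\sC$ is the full family of tangent lines issuing from $\sC$; since each $\cV(X)_P$ has dimension $n-1$ by Lemma \ref{lem:dimension}, this family has dimension $n$.

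Next I would feed $\cT_\sC=\cV_\sC$ into the focal machinery. If this family is filling, a general line $\ell$ meets $\sC$ in a single point $P$, and because the subfamily $\cT_P$ through $P$ has dimension $n-1$, Lemma \ref{lem:fundpoint} makes $P$ a focus of multiplicity $n-1$ on $\ell$. As $\ell$ is multi-tangent away from $P$, in cases $(C1)$--$(C3)$ of Remark \ref{rem:cases} Lemmas \ref{lem:focalnewtuttoinsieme} and \ref{lem:fundpoint} supply foci of total multiplicity at least $2$ at the tangency points, so the focal scheme on $\ell$ has degree at least $n+1$, contradicting Lemma \ref{lem: degree focal scheme}. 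This is exactly the count in Lemma \ref{lem:case123}, and it rules out the filling case whenever the generic line of the family is of honest multi-tangent type.

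The crux, and where I expect the real difficulty, is that the engine of Lemma \ref{lem:case123} and Theorem \ref{thm:main} only ever concludes that $\sC$ lies in a codimension-$2$ space, which is already true here; so it yields no contradiction by itself, and I must instead exploit the stronger hypothesis that now \emph{every} tangent line from $P$ is multi-tangent. Two regimes remain: the tangent-cone case $(C4)$, and the case where $\cT_\sC$ is not filling. In the non-filling regime the non-dominance analysis of Proposition \ref{prop:filling} confines $\cV_\sC$ to cones with vertex containing $\langle\sC\rangle$, forcing the contact locus $R\subseteq X$ to be independent of $P$ and giving $\langle\sC\rangle\subseteq T_rX$ for every $r$ in the $(n-1)$-dimensional variety $R$; the multi-tangency moreover pairs the two contact points of a general line, producing an involution on $R$. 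The hard part will be to promote these constraints to the statement that $X$ is a cone, contradicting the hypothesis: I would argue through the Gauss map $\gamma\colon X\dashrightarrow\check{\bP}^{n+1}$, which is generically finite because $X$ is not a cone, using $\gamma(R)\subseteq\langle\sC\rangle^{\perp}$ together with the involution to force a common point of all tangent hyperplanes of $X$. The case $(C4)$ would be treated in the same spirit via Lemma \ref{lem:localcomputationtangentcone}: a line of the family lying in the tangent cone at a point of $X^\sing$ forces a splitting of the local branches of $X$ incompatible with $X$ being an irreducible non-cone. Establishing this last implication is the main obstacle; once $Z$ is shown to be finite, the proposition follows.
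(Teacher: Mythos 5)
You correctly reduce the proposition to showing that the locus $Z$ of points all of whose tangent lines are multi-tangent away from the point is finite, and you correctly locate the difficulty; but precisely at that point the proposal has a genuine gap, and the idea needed to close it is missing. For a curve $\sC\subseteq Z$ your focal engine only fires if the family $\cT_\sC=\cV_\sC$ is filling, and Proposition \ref{prop:filling} is unavailable because, as you note, Theorem \ref{thm:main} places $\sC$ in a linear space of codimension $2$. Your fallback for the non-filling regime rests on the claim that the Gauss map of $X$ is generically finite because $X$ is not a cone. This is false: a non-cone hypersurface can have a degenerate Gauss map --- for instance the tangent developable of a space curve in $\bP^3$ is not a cone, yet its dual variety is a curve, so the Gauss map has positive-dimensional fibres. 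The only duality statement that the hypothesis ``$X$ is not a cone'' buys is that $X^*$ is nondegenerate, i.e.\ not contained in a hyperplane (\cite[Theorem 1.25]{ProjDual}); your sketch (contact locus $R$, involution, common point of all tangent hyperplanes) is not an argument but a restatement of what would need to be proved, and the treatment of case (C4) is likewise only asserted.

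The paper closes exactly this gap by a construction that bypasses Theorem \ref{thm:main} and the filling/non-filling dichotomy altogether. It considers the family $\cX$ of \emph{all} lines through points $Q\in\sC$ tangent to $X$ at smooth points (and, for inner points, not contained in the tangent hyperplane at $Q$), and proves directly that $\cX$ is filling using nondegeneracy of $X^*$: the hyperplanes $H_Q^\vee$ dual to points of $\sC$ cut $X^*$ in dimension $r-1$, where $r=\dim X^*$, so through a general $Q\in\sC$ there is an $(r-1)$-dimensional family of tangent hyperplanes, each tangent along an $(n-r)$-dimensional subvariety of $X$; hence $\dim\cX=n$ and the general line of $\cX$ is tangent at a \emph{general} point of $X$, which forces dominance (a non-dominant family would have all its tangency points confined to finitely many hypersurfaces $V_i\neq X$). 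Note that this dimension count deliberately works with $r$ possibly smaller than $n$, which is exactly the situation your Gauss-map claim ignores. Then the focal count you already use --- $Q$ is a focus of multiplicity $n-1$ by Lemma \ref{lem:fundpoint}, and any multi-tangency would add multiplicity at least $2$, exceeding the bound $n$ of Lemma \ref{lem: degree focal scheme} --- shows the general line of $\cX$ is \emph{simply} tangent. In other words, instead of trying to refute $\cT_\sC=\cV_\sC$ by analysing the multi-tangent family, the paper proves that the family of all tangent lines through $\sC$ is filling and that its general member is simply tangent, which directly produces the transposition; this positive use of the dual variety is the step your proposal lacks.
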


\begin{proof}
This result has been proved for outer non--uniform points in \cite[Theorem 1.4]{ccm}. For simplicity here we will restrict to the case of inner non--uniform points.
Let $\sC$ be a curve inside $\cW(X)^\int$ and let $\cX_Q$ be the family of lines through a point $Q \in \sC$ tangent to $X$ at smooth points and not lying in the tangent hyperplane to $X$ at $Q$. Let $\cX$ be the union of $\cX_Q$ for every $Q \in \sC$. 

Let $X^* \subset (\bP^{n+1})^*$ be the dual variety of $X$ and let $r$ be the dimension of $X^*$. If $X$ is not a cone, by \cite[Theorem 1.25]{ProjDual} we have that $X^*$ is not contained in a hyperplane. Consider the family of hyperplanes $\{H_Q^{\vee}\}$ in $(\bP^{n+1})^*$ dual to the points of $Q \in \sC$. 

If $H_Q^{\vee}$ is general in this family, $\dim(X^* \cap H_Q^{\vee})=r-1$. Since every point of $X^*$ is contained in one of such hyperplanes, by dualization we get that the tangent hyperplane to $X$ at a general point pass through the general point of $\sC$.
As a consequence, the general $Q \in \sC$ is contained in a $r-1$ dimensional family of tangent hyperplanes to $X$. 
The general element of this family is tangent to $X$ along a subvariety of dimension $n-r$ and every line joining $Q$ and this subvariety belongs to $\cX$. In particular, the dimension of the family $\cX$ is $n$, and the general line in $\cX$ is tangent at the general point of $X$.

We still need to show that $\cX \to \bP^{n+1}$ is dominant. If this were not the case, the general line of $\cX$ would be contained in a finite union of hypersurfaces $V_i$, hence it would not be tangent at the general point of $X$. Notice that $V_i$ cannot coincide with $X$ itself since $X$ is not a cone. 

Consider the focal scheme of $\cX$ on $\ell$. Since $\ell \cap Q$ is a point of multiplicity $n-1$, it gives by Lemma \ref{lem:fundpoint} a focus of multiplicity $n-1$. Assume that the general line $\ell$ of $\cX$ is not simply tangent to $X$. Then, we would have on $\ell$ at least two focal points of degree one, or at least one focal point of degree $2$. This contradicts Lemma \ref{lem: degree focal scheme}.

As a consequence, if $X$ is not a cone, we can find simple tangent lines to $X$ passing through the general point of $\cW(X)^\int$, that correspond to transpositions in the monodromy group. 
\end{proof}
As a consequence, we can prove Theorem \ref{thm:transpositions}: 

\begin{proof}[Proof of Theorem \ref{thm:transpositions}]
Proposition \ref{prop:trasposizioni} implies that, if $\cW(X)$ is not finite and $X$ is not a cone, then for all but finitely many points there exist a transposition in the monodromy group. As a consequence, for all but finitely many points $Q \in \cW(X)$, the field extension given by the projection from $Q$ is not Galois, since the action of an element in the Galois group on a general fibre has no fixed components. Thus, the locus of Galois points is finite. 

Morevoer, if the map $\pi_P$ is indecomposable and the monodromy group of $\pi_P$ contains a transposition, then the point $P$ is uniform, see Remark  \ref{remark:decomposable}. As a consequence, if $P \notin X$ and $\deg(X)$ is prime, then the map $\pi_P$ is indecomposable and so $\cW(X)^\out $ is finite. If $P \in X^\sm$ and $\deg(X)-1$ is prime, then the map $\pi_P$ is indecomposable and so $\cW(X)^\int$ is finite. 
\end{proof}
 
We conclude this section by some considerations on Conjecture \ref{conj:coni}, which at the moment is the state of the art generalisation of \cite[Remark 4.11]{ccm}.
 
\begin{cor}\label{cor:controesempi}
The only class of hypersurface which could provide a counterexample to conjecture \ref{conj:coni} is given by irreducible and reduced hypersurfaces $X$ in $\bP^{n+1}$, which are not cones, and such that there exists a $\bP^k$, $0<k<n$, where $X\cap \bP^{k+1}$ is reducible for every $\bP^{k+1} \supset \bP^k$. 
\end{cor}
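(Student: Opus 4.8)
The plan is to isolate, within the proof of Theorem~\ref{thm:projectionsmooth}, the portion that does not use the smoothness or general-projection hypothesis, and to observe that this portion already pins down the shape of any counterexample. So I would start by supposing $X$ is a counterexample to Conjecture~\ref{conj:coni}: irreducible, reduced, not a cone, with $\cW(X)$ infinite. Then $\cW(X)$ contains an irreducible component of positive dimension; let $K\cong\bP^k$ be the smallest linear subspace containing such a component, and let $\sC\subseteq\cW(X)$ be an irreducible curve spanning $K$. Theorem~\ref{thm:main} places $\cW(X)$ inside a finite union of codimension-$2$ linear spaces, which forces $1\le k\le n-1$, i.e. the range $0<k<n$ demanded by the statement.

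The heart of the argument is to show that $X\cap H$ is reducible for a general $H\cong\bP^{k+1}$ containing $K$, following the first half of the proof of Theorem~\ref{thm:projectionsmooth}. If $K\not\subseteq X$, the base locus $X\cap K$ has the expected dimension $\dim(X\cap H)-1$, so a general $X\cap H$ is reduced; I would then assume for contradiction that it is also irreducible. An argument as in Proposition~\ref{prop:monodromygeneralsection} yields $\sC\subseteq\cW(X)\cap H\subseteq\cW(X\cap H)$, and applying Theorem~\ref{thm:main} to the reduced irreducible hypersurface $X\cap H\subset H\cong\bP^{k+1}$ when $k\ge 2$ (respectively the irreducible-curve case of \cite{ps} when $k=1$) confines $\cW(X\cap H)$ to a finite union of codimension-$2$ linear spaces of $H$. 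This is impossible, since $\sC$ spans the codimension-$1$ subspace $K$ of $H$. Hence the general $X\cap H$ is reducible. The complementary case $K\subseteq X$ is even easier, and I defer it to the extension step.

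Finally I would upgrade ``general $H$'' to ``every $H$''. When $K\not\subseteq X$, no $\bP^{k+1}\supseteq K$ can lie in $X$, so the sections $X\cap H$ over the irreducible parameter space $T\cong\bP^{n-k}$ of planes through $K$ form a flat family with fibres of pure dimension $k$; geometric irreducibility is then open on $T$, so the reducible locus is closed, and as it contains the dense set of general $H$ it must be all of $T$, giving reducibility for every plane. When $K\subseteq X$, the plane $K$ is itself a top-dimensional component of every $X\cap H$, so each such section is reducible outright, the only possible exception being a plane $H$ with $H\subseteq X$, for which $X\cap H=H$ is irreducible.

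I expect this last configuration to be the main obstacle: the regime $K\subseteq X$ together with some $\bP^{k+1}\supseteq K$ contained in $X$ would violate the ``for every'' clause at that plane, so I would need to exclude it using the minimality of $K$ and the hypothesis that $X$ is not a cone (or to replace $K$ by a better linear space of the same dimension). Securing flatness of the family over $T$ — equivalently, controlling the points of $T$ where the fibre dimension jumps — is the precise technical content here; note that this difficulty disappears entirely when $K\not\subseteq X$, where the fibre dimension is constant. The boundary value $k=1$, where Theorem~\ref{thm:main} is unavailable and one substitutes the curve case of the conjecture from \cite{ps}, is a second, milder point to verify.
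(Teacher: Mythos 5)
Your argument through ``general $H$'' reproduces the paper's entire proof: the paper likewise takes the span $\bP^k$ of an infinite component of $\cW(X)$ (so $k<n$ by Theorem \ref{thm:main}), notes that a general $H\cong\bP^{k+1}\supset\bP^k$ cuts a reduced section, and, assuming $X\cap H$ irreducible, derives a contradiction from $\cW(X)\cap H\subseteq\cW(X\cap H)$ together with Theorem \ref{thm:main} applied inside $H$, since $\cW(X)$ spans a codimension-one subspace of $H$. You are in fact more careful than the paper on two points it passes over silently: the boundary case $k=1$, where Theorem \ref{thm:main} does not apply and one must quote the curve results of \cite{ps}, and the reducedness of $X\cap H$ via the dimension of the base locus $X\cap K$.

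Where you go beyond the paper is the upgrade from ``general $H$'' to ``every $H$'': the paper's proof simply stops at general $H$, so the ``for every'' clause in the statement is not actually argued there. Two comments on your attempt. First, the mechanism you invoke is wrong as stated: geometric irreducibility is \emph{not} an open condition in proper flat families (two distinct points can degenerate to a double point, as in the family $x^2=ty^2$), so the reducible locus need not be closed. What is open is geometric \emph{integrality}; equivalently, the locus of $H$ for which the degree-$d$ form defining $X\cap H$ factors is closed, because the multiplication maps $\bP\bigl(H^0(\cO(a))\bigr)\times\bP\bigl(H^0(\cO(d-a))\bigr)\to\bP\bigl(H^0(\cO(d))\bigr)$ are proper. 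Hence your closure argument yields only that every section $X\cap H$ with $H\not\subseteq X$ is reducible \emph{or non-reduced}, not reducible. Second, the configuration you flag as the main obstacle ($K\subseteq X$ together with some $H\supseteq K$ contained in $X$, where $X\cap H=H$ is genuinely irreducible) is a real loose end, and it is not excluded by the paper either. So your proposal is correct exactly to the extent that the paper's own proof is: the shared core is sound and is the whole of the paper's argument, while the residual issues you identify (and partially mishandle) are imprecisions in the corollary as written, whose conclusion is honestly established only for general $\bP^{k+1}$, or for every $\bP^{k+1}$ with ``reducible'' weakened to ``not integral''.
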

\begin{proof}
By Theorem \ref{thm:main} the non--uniform locus $\cW(X)$ is not finite and contained in a linear subspace $\langle \cW(X) \rangle \cong \bP^k$ with $k<n$. Consider a general $H\cong \bP^{k+1}$ such that $H \supset \bP^k$. The section $X \cap H$ is reduced since $X$ is. If moreover $X \cap H$ is irreducible, then we get a contradiction to Theorem \ref{thm:main}, since $\cW(X)=\cW(X \cap H)$ is in codimension one in $H$. 
\end{proof}
 
\section*{Acknowledgements}
The authors are members of GNSAGA of INdAM. During the preparation of the paper, the authors were partially supported by PRIN 2017 Moduli and Lie theory, and by MIUR: Dipartimenti di Eccellenza Program (2018-2022) -
Dept. of Math. Univ. of Pavia.

We would like to thank Gian Pietro Pirola for the helpful discussions and suggestions during the preparation of the paper. 
We are also grateful to Alberto Albano and Francesco Russo for the help in understanding how to remove the condition (C4) from the proof of the main result.

\bibliographystyle{siam}
\bibliography{biblio}

\begin{thebibliography}{10}

\bibitem{Adlandsvik}
{\sc B.~{\AA}dlandsvik}, {\em Joins and higher secant varieties}, Math. Scand.,
  61 (1987), pp.~213--222.

\bibitem{abo}
{\sc A.~Alzati, E.~Ballico, and G.~Ottaviani}, {\em The theorem of {M}ather on
  generic projections for singular varieties}, Geom. Dedicata, 85 (2001),
  pp.~113--117.

\bibitem{cifani2020monodromy}
{\sc M.~G. Cifani}, {\em Monodromy of general hypersurfaces}, preprint
  arxiv:2007.09958,  (2020).

\bibitem{ccm}
{\sc M.~G. Cifani, A.~Cuzzucoli, and R.~Moschetti}, {\em Monodromy of
  projections of hypersurfaces}, Annali di Matematica Pura ed Applicata,
  (2021).

\bibitem{CF}
{\sc C.~Ciliberto and F.~Flamini}, {\em On the branch curve of a general
  projection of a surface to a plane}, Trans. Amer. Math. Soc., 363 (2011),
  pp.~3457--3471.

\bibitem{cuk}
{\sc F.~Cukierman}, {\em Monodromy of projections}, Mat. Contemp., 16 (1999),
  pp.~9--30.

\bibitem{cms}
{\sc A.~Cuzzucoli, R.~Moschetti, and M.~Serizawa}, {\em Non-uniform projections
  of surfaces in {$\mathbb{P}^3$}}, Le matematiche, LXXII (2017).

\bibitem{FukCom}
{\sc S.~Fukasawa}, {\em Complete determination of the number of {G}alois points
  for a smooth plane curve}, Rend. Semin. Mat. Univ. Padova, 129 (2013),
  pp.~93--113.

\bibitem{FukAut}
\leavevmode\vrule height 2pt depth -1.6pt width 23pt, {\em Automorphism groups
  of smooth plane curves with many {G}alois points}, Nihonkai Math. J., 25
  (2014), pp.~69--75.

\bibitem{FT}
{\sc S.~Fukasawa and T.~Takahashi}, {\em Galois points for a normal
  hypersurface}, Trans. Amer. Math. Soc., 366 (2014), pp.~1639--1658.

\bibitem{FulLaz}
{\sc W.~Fulton and R.~Lazarsfeld}, {\em Connectivity and its applications in
  algebraic geometry}, Algebraic geometry ({C}hicago, {I}ll., 1980), 862
  (1981), pp.~26--92.

\bibitem{MR199181}
{\sc A.~Grothendieck}, {\em \'{E}l\'{e}ments de g\'{e}om\'{e}trie
  alg\'{e}brique. {IV}. \'{E}tude locale des sch\'{e}mas et des morphismes de
  sch\'{e}mas. {II}}, Inst. Hautes \'{E}tudes Sci. Publ. Math.,  (1965),
  p.~231.

\bibitem{H}
{\sc J.~Harris}, {\em Galois groups of enumerative problems}, Duke Math.
  Journal, 46 (1979), pp.~685--724.

\bibitem{harrisAG}
\leavevmode\vrule height 2pt depth -1.6pt width 23pt, {\em Algebraic geometry},
  Graduate Texts in Mathematics, 133 (1995).

\bibitem{Isaacs}
{\sc I.~M. Isaacs}, {\em Finite group theory}, vol.~92 of Graduate Studies in
  Mathematics, American Mathematical Society, Providence, RI, 2008.

\bibitem{Laz1}
{\sc R.~Lazarsfeld}, {\em Positivity in algebraic geometry. {I}}, Ergebnisse
  der Mathematik und ihrer Grenzgebiete. 3. Folge., 48 (2004).

\bibitem{Mather}
{\sc J.~N. Mather}, {\em Generic projections}, Ann. of Math. (2), 98 (1973),
  pp.~226--245.

\bibitem{ps}
{\sc G.~P. Pirola and E.~Schlesinger}, {\em Monodromy of projective curves}, J.
  Algebraic Geom., 14 (2005), pp.~623--642.

\bibitem{Segre}
{\sc C.~Segre}, {\em Un' osservazione sui sistemi di rette degli spazi
  superiori}, Rendiconti del Circolo Matematico di Palermo, Tomo II (1888),
  pp.~148--349.

\bibitem{ProjDual}
{\sc E.~A. Tevelev}, {\em Projectively dual varieties}, J. Math. Sci. (N.Y.),
  117 (2003), pp.~4585--4732.

\bibitem{Yoshihara}
{\sc H.~Yoshihara}, {\em Galois points for smooth hypersurfaces}, J. Algebra,
  264 (2003), pp.~520--534.

\end{thebibliography}
 
\end{document}